\title[Polynomial Maps and Their Fixed-Point Multipliers: II. Improvement to Algorithm%
]{The Moduli Space of Polynomial Maps and Their Fixed-Point Multipliers: II. 
Improvement to the Algorithm and Monic Centered Polynomials}
\author{Toshi Sugiyama}
\thanks{Published online in Ergodic Theory Dynam. Systems (2023) pp.1--19}
\date{\today}
\address{Mathematics Studies, Gifu Pharmaceutical University, Mitahora-higashi 5-6-1, Gifu-city, Gifu 502-8585, Japan}
\email{sugiyama-to@gifu-pu.ac.jp}
\subjclass[2010]{Primary 37F10; Secondary 05A19, 14D20%
}
\keywords{complex dynamics, fixed-point multipliers, moduli space of polynomial maps, monic centered polynomials, partition of integers, combinatorial identities, inclusion-exclusion formula}
\thanks{This work was supported by JSPS KAKENHI Grant Number JP19K14557}
\theoremstyle{plain}
\newtheorem{mtheorem}{Theorem}
\newtheorem{mcorollary}[mtheorem]{Corollary}
\newtheorem{theorem}{Theorem}[section]
\newtheorem{lemma}[theorem]{Lemma}
\newtheorem{proposition}[theorem]{Proposition}
\theoremstyle{definition}
\newtheorem{remark}[theorem]{Remark}
\newtheorem{definition}[theorem]{Definition}
\newtheorem{example}[theorem]{Example}
\numberwithin{equation}{section}
\begin{document}

\begin{abstract}
 We consider the family $\mathrm{MC}_d$ of monic centered
 polynomials of one complex variable with degree $d \geq 2$, and 
 study the map $\widehat{\Phi}_d:\mathrm{MC}_d\to \widetilde{\Lambda}_d \subset \mathbb{C}^d / \mathfrak{S}_d$
 which maps each $f \in \mathrm{MC}_d$ to its unordered collection of fixed-point
 multipliers.
 We give an explicit formula for counting the number of elements
 of each fiber $\widehat{\Phi}_d^{-1}\left(\bar{\lambda}\right)$
 for every $\bar{\lambda} \in \widetilde{\Lambda}_d$
 except when the fiber $\widehat{\Phi}_d^{-1}\left(\bar{\lambda}\right)$
 contains polynomials having multiple fixed points.
 This formula is not a recursive one, 
 and is a drastic improvement of our previous result
 [T. Sugiyama,  The moduli space of polynomial maps and their fixed-point multipliers.  
  \textit{Adv. Math.} 322  (2017),  132--185]
 which gave a rather long algorithm with some induction processes.
\end{abstract}

\maketitle

\section{Introduction}\label{secintroduction}

This paper is a continuation of the author's previous work~\cite{sugi1}.

We first remind our setting from~\cite{sugi1}.
Let $\mathrm{MP}_d$ be the family of affine conjugacy classes of polynomial maps of one complex variable with degree $d \geq 2$,
and $\mathbb{C}^d / \mathfrak{S}_d$ the set of unordered collections of $d$ complex numbers, where ${\mathfrak S}_{d}$ denotes the $d$-th symmetric group.
We denote by $\Phi_d$ the map 
\[
\Phi_d : \mathrm{MP}_d \to \widetilde{\Lambda}_d \subset \mathbb{C}^d / \mathfrak{S}_d
\]
which maps  each $f \in \mathrm{MP}_d$ to its unordered collection of fixed-point multipliers.
Here, fixed-point multipliers of $f \in \mathrm{MP}_d$ always satisfy a certain relation by the fixed point theorem for polynomial maps (see Section~12 in~\cite{mi_book}), which implies that
the image of $\Phi_d$ is contained in a certain hyperplane $\widetilde{\Lambda}_d$ in $\mathbb{C}^d / \mathfrak{S}_d$.

As mentioned in~\cite{sugi1}, it is well known that the map $\Phi_d : \mathrm{MP}_d \to \widetilde{\Lambda}_d$ is bijective for $d=2$ and also for $d=3$ (see~\cite{mi_cub}).
For $d \geq 4$, Fujimura and Nishizawa
have done some preliminary works
in finding $\#\left(\Phi_d^{-1}(\bar{\lambda})\right)$ for $\bar{\lambda} \in \widetilde{\Lambda}_d$ in their series of papers such as~\cite{NishizawaFujimura},~\cite{Fujimura2} and~\cite{fu}.
Hereafter, $\#(X)$, or simply $\# X$, denotes the cardinality of a set $X$.
Fujimura and Taniguchi~\cite{tani} also constructed a compactification of $\mathrm{MP}_d$,
which gave us a strong geometric insight on the fiber structure of $\Phi_d$.
Other compactifications of $\mathrm{MP}_d$ were also constructed independently
by Silverman~\cite{sil} and by DeMarco and McMullen~\cite{demarco}.
For rational maps and their periodic-point multipliers, McMullen~\cite{Mc} gave a general important result. 
In a special case of~\cite{Mc}, 
there is a famous result by Milnor~\cite{mi_qua} for rational maps of degree two and their fixed-point multipliers.
There is also a result by Hutz and Tepper~\cite{HutzTepper} for rational maps of degree three
and their periodic-point multipliers of period less than or equal to two.
There are some other results~\cite{Gorbovickis1},~\cite{Gorbovickis2} concerning polynomial or rational maps and their periodic-point multipliers. 
(See~\cite{sugi1} for more details.)

Following the results above, in~\cite{sugi1}, we succeeded in giving, for every $\bar{\lambda} = \left\{ \lambda_1,\dots, \lambda_d \right\} \in \widetilde{\Lambda}_d$,
an algorithm for counting the number of elements of $\Phi_d^{-1}(\bar{\lambda})$
except when $\lambda_i = 1$ for some $i$.
However, 
the algorithm was rather long and complicated.
In this paper, we make a {\it drastic improvement} to its algorithm;  
we no longer need induction processes to find $\#\left(\Phi_d^{-1}(\bar{\lambda})\right)$
if we consider $\Phi_d^{-1}(\bar{\lambda})$ counted with multiplicity (see Theorem~\ref{mainthm1}).
Moreover, if we consider the family $\mathrm{MC}_d$ of monic centered polynomials of degree $d$ and the map $\widehat{\Phi}_d : \mathrm{MC}_d \to \widetilde{\Lambda}_d$, instead of $\mathrm{MP}_d$ and $\Phi_d : \mathrm{MP}_d \to \widetilde{\Lambda}_d$,
we can always give an explicit expression of $\#\left(\widehat{\Phi}_d^{-1}(\bar{\lambda})\right)$ even when its multiplicity is ignored (see Theorem~\ref{mainthm2} and Corollary~\ref{cor3}).
Here, $\widehat{\Phi}_d : \mathrm{MC}_d \to \widetilde{\Lambda}_d$ is defined to be the composite mapping 
of the natural projection $\mathrm{MC}_d \to \mathrm{MP}_d$ and $\Phi_d$.
Interestingly, the formula for finding $\#\left(\Phi_d^{-1}(\bar{\lambda})\right)$ in Theorem~\ref{mainthm1} has the form of the inclusion-exclusion formula.

There are five sections and one appendix in this paper. 
In Sections~\ref{secmainresult1} and~\ref{secmainresult2},
we shall review the results in~\cite{sugi1} more precisely 
and state Theorems~\ref{mainthm1},~\ref{mainthm2} and Corollary~\ref{cor3},
which are the main results in this paper.
Section~\ref{proof1} is devoted to the proof of Theorem~\ref{mainthm1}, and 
Section~\ref{proof2} is devoted to the proof of Theorem~\ref{mainthm2}.
The main part in this paper is the proof of Theorem~\ref{mainthm1} in Section~\ref{proof1}, 
which consists of a good deal of combinatorial argument.
Compared with the proof of Theorem~\ref{mainthm1}, the proof of Theorem~\ref{mainthm2} in Section~\ref{proof2} is relatively easy under the assumption of~\cite{sugi1}.
However, by combining Theorems~\ref{mainthm1} and~\ref{mainthm2}, we directly have Corollary~\ref{cor3}, 
which is, in some sense, a monumental achievement of our study.
In Appendix, we 
explain
how to
find out the formula~(\ref{eq2.5}) in Theorem~\ref{mainthm1}.

\proof[\bf Acknowledgements]
The author would like to express his thanks to Professor Hiroki Sumi for 
valuable advices on this paper.

\section{Main result 1}\label{secmainresult1}

In this section, we always consider $\Phi_d^{-1}(\bar{\lambda})$ {\it counted with multiplicity}, and
deal with improvements to the algorithm for finding $\#\left(\Phi_d^{-1}(\bar{\lambda})\right)$.
We first fix our notation.

For $d \ge 2$, we put
\[
 \mathrm{Poly}_d := \left\{f \in \mathbb{C}[z] \bigm| \deg f = d  \right\}
 \quad \textrm{and} \quad
 \mathrm{Aut}(\mathbb{C})
 := \left\{\gamma (z) = az+b \bigm| a,b \in \mathbb{C},\ a \ne 0 \right\}.
\]
Since $\gamma \in \mathrm{Aut}(\mathbb{C})$
naturally acts on $f \in \mathrm{Poly}_d$
by $\gamma \cdot f := \gamma \circ f \circ \gamma^{-1}$,
we can define its quotient $\mathrm{MP}_d := \mathrm{Poly}_d / \mathrm{Aut}(\mathbb{C})$,
which we usually call the moduli space of polynomial maps
of degree $d$.
We put $\mathrm{Fix}(f) := \{ z \in \mathbb{C} \bigm| f(z)=z\}$ for $f \in \mathrm{Poly}_d$, where $\mathrm{Fix}(f)$ is considered counted with multiplicity.
Hence, we always have $\# \left(\mathrm{Fix}(f)\right) = d$.
Since the unordered collection of fixed-point multipliers $\left(f'(\zeta)\right)_{\zeta \in \mathrm{Fix}(f)}$ of $f \in \mathrm{Poly}_d$
is invariant under the action of $\mathrm{Aut}(\mathbb{C})$,
we can naturally define the map
$\Phi_d:\mathrm{MP}_d \to \mathbb{C}^{d}/\mathfrak{S}_{d}$
by $\Phi_d(f):=\left(f'(\zeta)\right)_{\zeta \in \mathrm{Fix}(f)}$.
Here, $\mathfrak{S}_{d}$ denotes the $d$-th symmetric group 
which acts on $\mathbb{C}^{d}$ by the permutation of coordinates.
Note that a fixed point $\zeta \in \mathrm{Fix}(f)$
is multiple if and only if $f'(\zeta)=1$.

By the fixed point theorem for polynomial maps, we always have $\sum_{\zeta \in \mathrm{Fix}(f)} \frac{1}{1-f'(\zeta)} = 0$
for $f \in \mathrm{Poly}_d$ if $f$ has no multiple fixed point. 
(See Section~12 in~\cite{mi_book} or Proposition~1.1 in~\cite{sugi1} for more datails.) 
Hence, putting $\Lambda_d :=\left\{(\lambda_1,\ldots ,\lambda_d)\in\mathbb{C}^d \ \left| \
  \sum_{i=1}^d \prod_{j\ne i} \left( 1- \lambda_j \right) = 0
 \right. \right\}$ and 
$\widetilde{\Lambda}_d := \Lambda_d / \mathfrak{S}_d$,
we have the inclusion relation 
$\Phi_d\left(\mathrm{MP}_d\right) \subseteq \widetilde{\Lambda}_d \subseteq \mathbb{C}^{d}/\mathfrak{S}_{d}$.
We therefore have the map
\[
 \Phi_d : \mathrm{MP}_d \to \widetilde{\Lambda}_d
\]
by $f \mapsto \left(f'(\zeta) \right)_{\zeta \in \mathrm{Fix}(f)}$, 
which is the main object of our study.

In this paper, we again restrict our attention to the map $\Phi_d$ on the domain
where polynomial maps have no multiple fixed points, that is,
on the domains
\[
  V_d := \left\{ (\lambda_1,\ldots ,\lambda_d) \in \Lambda_d \bigm|
  \lambda_i \ne 1 \ \textrm{for every} \ 1\le i\le d \right\} \ 
  \textrm{and} \ \widetilde{V}_d := V_d/ \mathfrak{S}_d,
\]
which are Zariski open subsets
of $\Lambda_d$ and $\widetilde{\Lambda}_d$, respectively.
Here, note that we also have
\[
  V_d = \left\{ (\lambda_1,\ldots ,\lambda_d) \in \mathbb{C}^d \Biggm|
  \lambda_i \ne 1 \ \textrm{for every} \ 1\le i\le d,\ \ \sum_{i=1}^d \frac{1}{1-\lambda_i} = 0 \right\}.
\]
Throughout this paper, we always denote by $\bar{\lambda}$
the equivalence class of $\lambda \in \Lambda_d$ in $\widetilde{\Lambda}_d$, that is, 
$\bar{\lambda}=\textit{pr}(\lambda)$, 
where $\textit{pr}: \Lambda_d \to \widetilde{\Lambda}_d$ denotes the canonical projection.
Hence, for $\lambda = (\lambda_1,\ldots ,\lambda_d) \in \Lambda_d$, 
we sometimes express $\bar{\lambda} = \left\{ \lambda_1,\dots, \lambda_d \right\} \in \widetilde{\Lambda}_d$.
We never denote by $\bar{\lambda}$ the complex conjugate of $\lambda$ in this paper.

The objects defined in the following definition play a central roll in~\cite{sugi1} and also in this paper.

\begin{definition}\label{def2.1}
 For $\lambda =  (\lambda_1,\ldots ,\lambda_d) \in V_d$, we put
 \[
   \mathfrak{I}(\lambda) := \left\{ \left\{I_1,\ldots,I_l\right\}\ \left| \ 
	 \begin{matrix}
	   l \ge 2,\ \ I_1 \amalg \cdots \amalg I_l = \{1,\ldots,d \},\\
	   I_u \ne \emptyset \textrm{ for every } 1\le u\le l,\\
	  \sum_{i \in I_u} 1/(1-\lambda_i)=0
	    \textrm{ for every } 1\le u\le l
	 \end{matrix}
	 \right.\right\},
 \]
 where $I_1 \amalg \cdots \amalg I_l$ denotes the disjoint union of $I_1,\ldots,I_l$.
 By definition, each element of $\mathfrak{I}(\lambda)$ is considered to be a partition of $\{1,\ldots,d \}$.
 The partial order $\prec$ in $\mathfrak{I}(\lambda)$ is defined
 by the refinement of partitions, namely, 
 for $\mathbb{I}, \mathbb{I}'\in\mathfrak{I}(\lambda)$,
 the relation $\mathbb{I} \prec \mathbb{I}'$ holds if and only if
 $\mathbb{I}'$ is a refinement of $\mathbb{I}$ as partitions of $\{1,\ldots,d \}$.

 For $\lambda \in V_d$ and for $I \in \mathbb{I} \in \mathfrak{I}(\lambda)$, we put $\lambda_I := (\lambda_i)_{i \in I}$.
\end{definition}

In the above definition, note that 
the condition $I \in \mathbb{I} \in \mathfrak{I}(\lambda)$ for $I$ is equivalent to the conditions 
$\emptyset \subsetneq I \subsetneq \{1,\ldots,d \}$
and $\sum_{i \in I}  1/(1-\lambda_i)=0$.
Hence, we always have $\lambda_I \in V_{\# I}$ for $\lambda \in V_d$ and $I \in \mathbb{I} \in \mathfrak{I}(\lambda)$ by definition.
Also note that $\# I \geq 2$ holds for every $I \in \mathbb{I} \in \mathfrak{I}(\lambda)$.

The following object is also very important in this paper.

\begin{definition}\label{def2.2}
 For $\lambda \in V_d$ we put
 \[
   \mathfrak{I}'(\lambda) := \mathfrak{I}(\lambda)  \cup \left\{\left\{\left\{ 1, \dots, d\right\}\right\}\right\}.
 \]
 The partial order $\prec$ in $\mathfrak{I}(\lambda)$ is naturally extended to the partial order $\prec$ in $\mathfrak{I}'(\lambda)$.
\end{definition}

By definition, $\mathfrak{I}'(\lambda)$ is obtained from $\mathfrak{I}(\lambda)$ 
by adding exactly one element $\mathbb{I}_0 := \left\{\left\{ 1, \dots, d\right\}\right\}$.
Here, $\mathbb{I}_0$ is the unique minimum element of $\mathfrak{I}'(\lambda)$ with respect to the partial order $\prec$. 
Moreover, $\mathbb{I}_0$ is considered to be a partition of $\{ 1, \dots, d \}$ which, in practice, does not partition $\{ 1, \dots, d \}$.
We also have the equality
\[
     \mathfrak{I}'(\lambda) = \left\{ \left\{I_1,\ldots,I_l\right\}\ \left| \ 
	 \begin{matrix}
	   l \ge 1,\ \ I_1 \amalg \cdots \amalg I_l = \{1,\ldots,d \},\\
	   I_u \ne \emptyset \textrm{ for every } 1\le u\le l,\\
	  \sum_{i \in I_u} 1/(1-\lambda_i)=0
	    \textrm{ for every } 1\le u\le l
	 \end{matrix}
	 \right.\right\}.
\]

We already have the following theorem
by Main Theorem~III and Remark~1.8 in~\cite{sugi1}
and by Theorem~B and Proposition~C in Section~6 in~\cite{sugi1}.

\begin{theorem}\label{thm2.3}
 We can define the non-negative integer $e_{\mathbb{I}}(\lambda)$ for each $d \geq 4$, $\lambda \in V_d$ and $\mathbb{I} \in \mathfrak{I}(\lambda)$, 
 and can also define the non-negative integer $s_d(\lambda)$ for each $d \geq 2$ and $\lambda \in V_d$ inductively by the equalities
 \begin{equation}\label{eq2.3}
    s_d(\lambda) = (d-2)!
	  - \sum_{\mathbb{I} \in \mathfrak{I}(\lambda)} \left(
          e_{\mathbb{I}}(\lambda)\cdot
          \prod_{k=d-\# \mathbb{I} +1}^{d-2}k \right)
 \end{equation}
 for $d \geq 2$ and $\lambda \in V_d$, and
 \begin{equation}\label{eq2.2}
    e_{\mathbb{I}}(\lambda)
   = \prod_{I \in \mathbb{I}} \bigl( 
   \left( \# I -1 \right) \cdot s_{\# I}\left(\lambda_{I}\right)
    \bigr)
 \end{equation}
 for $d \geq 4$, $\lambda \in V_d$ and $\mathbb{I} \in \mathfrak{I}(\lambda)$.
 Here, in the case $\# \mathbb{I} = 2$, we put 
 $\prod_{k=d-\# \mathbb{I} +1}^{d-2}k = \prod_{k=d-1}^{d-2}k = 1$.

 If we consider $\Phi_d^{-1}(\bar{\lambda})$ {\rm `counted with multiplicity'} for $d \geq 2$ and $\lambda \in V_d$, then we have
 \[
    \#\left(\Phi_d^{-1}(\bar{\lambda})\right) = s_d(\lambda).
 \]
\end{theorem}

\begin{remark}
 For $d=2$ or  $3$, we always have $\mathfrak{I}(\lambda) = \emptyset$ for every $\lambda \in V_d$ by definition.
 Hence, by equation~(\ref{eq2.3}), we have $s_2(\lambda) = (2-2)!=1$ for every $\lambda \in V_2$, and 
 $s_3(\lambda) = (3-2)!=1$ for every $\lambda \in V_3$.
 For $d \geq 4$, every $e_{\mathbb{I}}(\lambda)$ and $s_d(\lambda)$ are determined uniquely and can actually be found by equations~(\ref{eq2.3}) and~(\ref{eq2.2}) by induction on $d$, since $2 \leq \# I < d$ holds for $I \in \mathbb{I} \in \mathfrak{I}(\lambda)$ with $\lambda \in V_d$.
\end{remark}

In the rest of this paper, 
we always assume that $e_{\mathbb{I}}(\lambda)$ and $s_d(\lambda)$ are the non-negative integers defined in Theorem~\ref{thm2.3}.

We already made a minor improvement to the above algorithm
by Main Theorem~III in~\cite{sugi1} and by Proposition~D in Section~6 in~\cite{sugi1},
as in the following.

\begin{theorem}\label{thm2.5}
 The non-negative integer $e_{\mathbb{I}}(\lambda)$ for $\lambda \in V_d$ and $\mathbb{I} \in \mathfrak{I}(\lambda)$ defined in Theorem~\ref{thm2.3}
 also satisfies the equality
 \begin{equation}\label{eq2.4}
	 e_{\mathbb{I}}(\lambda) =
	 \left(\prod_{I \in \mathbb{I}} \bigl( \# I  -1 \bigr)! \right)
	  - \sum_\textrm{\scriptsize $\begin{matrix}
				       \mathbb{I}' \in \mathfrak{I}(\lambda) \\
				       \mathbb{I}' \succ \mathbb{I}, \;
				       \mathbb{I}' \ne \mathbb{I}
				     \end{matrix}$}
	 \left(
	  e_{\mathbb{I}'}(\lambda) \cdot \prod_{I \in \mathbb{I}}
	  \left( \prod_{k=\# I - \chi_I(\mathbb{I}')+1 }^{\# I -1}k \right)
	 \right),
 \end{equation}
	where we put
	$\chi_I(\mathbb{I}'):=\#\left(\left\{ I'\in\mathbb{I}' \bigm|
	I' \subseteq I \right\}\right)$
	for $\mathbb{I}' \succ \mathbb{I}$ and $I \in \mathbb{I}$.
	Here, in the case $\chi_I(\mathbb{I}') = 1$, we put 
	$\prod_{k=\# I - \chi_I(\mathbb{I}')+1 }^{\# I -1}k = \prod_{k=\# I}^{\# I -1}k = 1$.
\end{theorem}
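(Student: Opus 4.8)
The plan is to reduce the multi-block identity~(\ref{eq2.4}) to a one-block identity derived from~(\ref{eq2.2}) and~(\ref{eq2.3}), and then to recover~(\ref{eq2.4}) by multiplying this one-block identity over the blocks of $\mathbb{I}$ and reorganizing the product as a sum over refinements. For a partition $\mathbb{J}$ of an index set of size $n$ I abbreviate $c(\mathbb{J},n):=\prod_{k=n-\#\mathbb{J}+1}^{n-1}k$, so that $c(\mathbb{J},n)=1$ when $\#\mathbb{J}=1$ and the product in~(\ref{eq2.4}) equals $\prod_{I\in\mathbb{I}}c\bigl(\{I'\in\mathbb{I}'\mid I'\subseteq I\},\#I\bigr)$.

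First I would record a one-block identity. Fix $I\in\mathbb{I}$ and set $n:=\#I$; since $\lambda_I\in V_n$, equation~(\ref{eq2.3}) applies to $s_n(\lambda_I)$. Multiplying that equation by $(n-1)$ and using $(n-1)\cdot\prod_{k=n-\#\mathbb{J}+1}^{n-2}k=c(\mathbb{J},n)$ turns it into
\[
  (n-1)\,s_n(\lambda_I)=(n-1)!-\sum_{\mathbb{J}\in\mathfrak{I}(\lambda_I)}e_{\mathbb{J}}(\lambda_I)\,c(\mathbb{J},n).
\]
Extending~(\ref{eq2.2}) to the one-block partition $\{I\}$ of $I$ by $e_{\{I\}}(\lambda_I):=(n-1)\,s_n(\lambda_I)$ and recalling $c(\{I\},n)=1$, this rearranges to the clean form
\[
  (n-1)!=\sum_{\mathbb{J}\in\mathfrak{I}'(\lambda_I)}e_{\mathbb{J}}(\lambda_I)\,c(\mathbb{J},n).
\]

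Next I would multiply this last identity over all $I\in\mathbb{I}$ and expand the product of sums, indexing the result by tuples $(\mathbb{J}_I)_{I\in\mathbb{I}}\in\prod_{I\in\mathbb{I}}\mathfrak{I}'(\lambda_I)$. The combinatorial heart of the argument is the bijection sending such a tuple to the partition $\mathbb{I}':=\bigcup_{I\in\mathbb{I}}\mathbb{J}_I$ of $\{1,\dots,d\}$: because the sum condition of Definition~\ref{def2.1} is imposed separately on each block and every block of $\mathbb{I}$ already satisfies it, a refinement $\mathbb{I}'\succ\mathbb{I}$ (allowing $\mathbb{I}'=\mathbb{I}$) lies in $\mathfrak{I}(\lambda)$ precisely when each induced partition $\mathbb{J}_I=\{I'\in\mathbb{I}'\mid I'\subseteq I\}$ lies in $\mathfrak{I}'(\lambda_I)$, and then $\chi_I(\mathbb{I}')=\#\mathbb{J}_I$. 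Under this correspondence the factored definition~(\ref{eq2.2}) regroups as $\prod_{I\in\mathbb{I}}e_{\mathbb{J}_I}(\lambda_I)=e_{\mathbb{I}'}(\lambda)$ and $\prod_{I\in\mathbb{I}}c(\mathbb{J}_I,\#I)$ becomes the product in~(\ref{eq2.4}), giving
\[
  \prod_{I\in\mathbb{I}}(\#I-1)!=\sum_{\substack{\mathbb{I}'\in\mathfrak{I}(\lambda)\\\mathbb{I}'\succ\mathbb{I}}}e_{\mathbb{I}'}(\lambda)\prod_{I\in\mathbb{I}}c\bigl(\{I'\in\mathbb{I}'\mid I'\subseteq I\},\#I\bigr).
\]
Isolating the summand $\mathbb{I}'=\mathbb{I}$, which equals $e_{\mathbb{I}}(\lambda)$ since every $c(\{I\},\#I)=1$, and moving the remaining summands to the left-hand side produces exactly~(\ref{eq2.4}).

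The only real work lies in the two bookkeeping claims behind the bijection: that for $\mathbb{I}'\succ\mathbb{I}$ one has $\mathbb{I}'\in\mathfrak{I}(\lambda)$ if and only if $\mathbb{J}_I\in\mathfrak{I}'(\lambda_I)$ for every $I$ (so the expanded sum ranges over exactly the refinements of $\mathbb{I}$ and nothing else), and the multiplicativity $e_{\mathbb{I}'}(\lambda)=\prod_{I\in\mathbb{I}}e_{\mathbb{J}_I}(\lambda_I)$ even when some $\mathbb{J}_I$ is the trivial one-block partition. The first follows from the per-block nature of the condition $\sum_{i\in I_u}1/(1-\lambda_i)=0$, together with the observation that $\#\mathbb{I}\ge2$ forces every refinement to have at least two blocks and hence to lie in $\mathfrak{I}(\lambda)$ rather than merely in $\mathfrak{I}'(\lambda)$; the second is just the regrouping of the product in~(\ref{eq2.2}) once $e$ is extended to one-block partitions as above. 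I would finally verify the empty-product conventions in $c$ when $\chi_I(\mathbb{I}')=1$, which match those fixed in the statement and make the isolation of the $\mathbb{I}'=\mathbb{I}$ term exact.
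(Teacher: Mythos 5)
Your proposal is correct. Note that the paper itself gives no proof of Theorem~\ref{thm2.5} here --- it is imported from Theorem~D of~\cite{sugi1} --- so there is no in-paper argument to compare line by line; but your derivation is a valid, self-contained deduction of~(\ref{eq2.4}) from Theorem~\ref{thm2.3}, and it is built from exactly the ingredients the paper deploys in Section~\ref{proof1} for Theorem~\ref{mainthm1}: the normalization $e_{\{I\}}(\lambda_I):=(\#I-1)s_{\#I}(\lambda_I)$ turning~(\ref{eq2.2}) into the block-multiplicative form~(\ref{eq4.1}), the ``clean'' rewriting of~(\ref{eq2.3}) as $(n-1)!=\sum_{\mathbb{J}\in\mathfrak{I}'(\lambda_I)}e_{\mathbb{J}}(\lambda_I)\prod_{k=n-\#\mathbb{J}+1}^{n-1}k$, and the bijection $\left\{ \mathbb{I}'_1 \amalg \cdots \amalg \mathbb{I}'_l \mid \mathbb{I}'_u \in \mathfrak{I}'\left( \lambda_{I_u} \right)\right\} = \left\{ \mathbb{I}' \in \mathfrak{I}(\lambda) \mid \mathbb{I}' \succ \mathbb{I}\right\}$ used in~(\ref{eq4.5}). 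Your two bookkeeping claims (the per-block characterization of refinements, with $\#\mathbb{I}\ge 2$ guaranteeing $\mathbb{I}'\in\mathfrak{I}(\lambda)$ rather than merely $\mathfrak{I}'(\lambda)$, and the multiplicativity $e_{\mathbb{I}'}(\lambda)=\prod_{I\in\mathbb{I}}e_{\mathbb{J}_I}(\lambda_I)$) both check out, as does the factor identity $(n-1)\prod_{k=n-\#\mathbb{J}+1}^{n-2}k=\prod_{k=n-\#\mathbb{J}+1}^{n-1}k$ and the isolation of the $\mathbb{I}'=\mathbb{I}$ summand via $\chi_I(\mathbb{I})=1$.
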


\begin{remark}
 By definition, we always have $\sum_{I \in \mathbb{I}} \chi_I(\mathbb{I}') = \# \mathbb{I}'$ for $\mathbb{I}' \succ \mathbb{I}$.
\end{remark}

\begin{remark}
We can also find $s_d(\lambda)$ only
by using equations~(\ref{eq2.3}) and~(\ref{eq2.4}).
The algorithm using equations~(\ref{eq2.3}) and~(\ref{eq2.4}) is a little simpler than the algorithm in
 Theorem~\ref{thm2.3}.
\end{remark}

\begin{remark}
 We present a rough outline of the proof of Theorem~\ref{thm2.5} in this remark,
 since the proof can be an easy exercise for the proof of Theorem~\ref{mainthm1} in this paper.
 (See ``Proof of Proposition D" on p.175-177 in~\cite{sugi1} for details.)
 In the case where $d = \# I$ and $\lambda = \lambda_I$, equation~(\ref{eq2.3}) is equivalent to the following:
 \begin{equation}\label{eq2.7}
  \bigl( \# I - 1 \bigr)! = \bigl( \# I - 1 \bigr) s_{\# I}(\lambda_I) + \sum_{\mathbb{I} \in \mathfrak{I}(\lambda_I)} 
  \left( e_{\mathbb{I}}(\lambda_I)\cdot \prod_{k=\#I -\# \mathbb{I}+1}^{\#I - 1}k \right).
 \end{equation}
 Plugging equation~(\ref{eq2.7}) into $\prod_{I \in \mathbb{I}} \bigl( \# I  -1 \bigr)!$
 and using equation~(\ref{eq2.2}) carefully, we have equation~(\ref{eq2.4}).
\end{remark}

In this paper,
we make 
a drastic improvement to the above algorithm as in the following.

\begin{mtheorem}\label{mainthm1}
 The non-negative integer $s_d(\lambda)$ for $d \geq 2$ and $\lambda \in V_d$ defined in Theorem~\ref{thm2.3} is expressed in the form
 \begin{equation}\label{eq2.5}
    (d-1)s_d(\lambda) = \sum_{\mathbb{I}\in \mathfrak{I}'(\lambda)} 
				\left( \left\{ -(d-1) \right\}^{\#\mathbb{I} - 1} 
				\cdot \prod_{I \in \mathbb{I}}\left( \#I - 1 \right)! \right).
 \end{equation}
 Hence, if we consider $\Phi_d^{-1}(\bar{\lambda})$ {\rm `counted with multiplicity'} for $d \geq 2$ and $\lambda \in V_d$, then we have
 \begin{equation}\label{eq2.6}
    \#\left(\Phi_d^{-1}(\bar{\lambda})\right) = -\sum_{\mathbb{I}\in \mathfrak{I}'(\lambda)} 
				\left( \left\{ -(d-1) \right\}^{\#\mathbb{I} - 2} 
				\cdot \prod_{I \in \mathbb{I}}\left( \#I - 1 \right)! \right).
 \end{equation}
\end{mtheorem}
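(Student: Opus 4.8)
The deduction of~(\ref{eq2.6}) from~(\ref{eq2.5}) is immediate, so I would dispose of it first. Since $\#\bigl(\Phi_d^{-1}(\bar{\lambda})\bigr)=s_d(\lambda)$ by Theorem~\ref{thm2.3}, and since $(-(d-1))^{\#\mathbb{I}-1}/(d-1)=-(-(d-1))^{\#\mathbb{I}-2}$, formula~(\ref{eq2.6}) is obtained from~(\ref{eq2.5}) simply by dividing both sides by $(d-1)$; thus the two are equivalent and it suffices to prove~(\ref{eq2.5}). I would prove~(\ref{eq2.5}) by induction on $d$. The base cases $d=2,3$ are trivial: there $\mathfrak{I}(\lambda)=\emptyset$, hence $\mathfrak{I}'(\lambda)=\{\mathbb{I}_0\}$ with $\#\mathbb{I}_0=1$, and both sides of~(\ref{eq2.5}) equal $(d-1)!$.

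For the inductive step I would feed the induction hypothesis into the recursion~(\ref{eq2.2})--(\ref{eq2.3}). Writing $P(\mathbb{K}):=\prod_{K\in\mathbb{K}}(\#K-1)!$, the hypothesis applied to each block (legitimate since $2\le\#I<d$ for every $I\in\mathbb{I}\in\mathfrak{I}(\lambda)$) gives $(\#I-1)s_{\#I}(\lambda_I)=\sum_{\mathbb{J}\in\mathfrak{I}'(\lambda_I)}(-(\#I-1))^{\#\mathbb{J}-1}\prod_{J\in\mathbb{J}}(\#J-1)!$. Substituting this into~(\ref{eq2.2}) and expanding the product over the blocks of $\mathbb{I}$, each resulting term is indexed by a choice of one admissible partition of each block; the union of these choices is exactly an admissible refinement $\mathbb{K}\succeq\mathbb{I}$, with $\chi_I(\mathbb{K})$ counting the chosen parts inside $I$. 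One thereby obtains
\[
 e_{\mathbb{I}}(\lambda)=\sum_{\substack{\mathbb{K}\in\mathfrak{I}'(\lambda)\\ \mathbb{K}\succeq\mathbb{I}}}\left(\prod_{I\in\mathbb{I}}\bigl(-(\#I-1)\bigr)^{\chi_I(\mathbb{K})-1}\right)P(\mathbb{K}).
\]
I would insert this into~(\ref{eq2.3}), use $(d-1)(d-2)!/(d-\#\mathbb{I})!=(d-1)!/(d-\#\mathbb{I})!$, and interchange the summations so that $\mathbb{K}$ runs on the outside and the coarsenings $\mathbb{I}\preceq\mathbb{K}$ with $\mathbb{I}\in\mathfrak{I}(\lambda)$ on the inside.

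The decisive structural point is that the defining condition $\sum_{i\in I}1/(1-\lambda_i)=0$ is \emph{additive} over blocks, so every coarsening of an admissible partition is again admissible. Hence the inner sum over $\mathbb{I}\preceq\mathbb{K}$ is nothing but a sum over all partitions of the $m$-element block-set of $\mathbb{K}$ (where $m:=\#\mathbb{K}$), and it no longer depends on $\lambda$. Comparing, for each fixed $\mathbb{K}$, the coefficient of $P(\mathbb{K})$ on the two sides of~(\ref{eq2.5})—the top term $\mathbb{K}=\mathbb{I}_0$ being absorbed by the leading $(d-2)!$ of~(\ref{eq2.3})—reduces the whole theorem to the purely combinatorial identity
\[
 \sum_{\pi\in\Pi_m}\frac{(d-1)!}{(d-\#\pi)!}\prod_{G\in\pi}\bigl(-(k_G-1)\bigr)^{\#G-1}=0\qquad(m\ge2),
\]
where $\Pi_m$ is the set of partitions of $\{1,\dots,m\}$, where $k_1,\dots,k_m\ge1$ are the block sizes of $\mathbb{K}$ with $d=k_1+\cdots+k_m$, and $k_G:=\sum_{t\in G}k_t$.

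The main obstacle is exactly this identity, and its content is striking: although each summand depends heavily on the individual sizes $k_t$, the total vanishes for every choice of them. I would attack it either by induction on $m$, conditioning on the group containing the last item, or—more cleanly—by generating functions. The exponents $\#G-1$ point to a rooted-forest/Cayley interpretation, and the global weight $(d-1)!/(d-\#\pi)!$ is the falling factorial $(d-1)^{\underline{\#\pi-1}}$, which can be linearised through $\sum_{r}(d-1)^{\underline{r-1}}z^{r-1}/(r-1)!=(1+z)^{d-1}$; passing the sum through the exponential formula should then collapse it to an Abel-type identity and force the vanishing. The appendix's account of how~(\ref{eq2.5}) was discovered suggests that such a generating-function computation is indeed the intended route. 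Once this identity is established, the coefficient comparison closes the induction, proving~(\ref{eq2.5}) and hence~(\ref{eq2.6}).
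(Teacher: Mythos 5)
Your reduction is exactly the one the paper carries out: multiplying (\ref{eq2.3}) by $d-1$, expanding $e_{\mathbb{I}}(\lambda)=\prod_{I\in\mathbb{I}}(\#I-1)s_{\#I}(\lambda_I)$ via the inductive hypothesis into a sum over refinements, interchanging the two summations, and matching coefficients of $\prod_{I'\in\mathbb{K}}(\#I'-1)!$ reduces everything to the vanishing identity you display, which is precisely the paper's equation~(\ref{eq4.9}) written in terms of partitions of the block set. Up to that point your argument is correct, and the observation that coarsenings of admissible partitions are automatically admissible (so the inner sum is over \emph{all} partitions of an $m$-set) is the same one the paper makes.

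The gap is that you do not prove the identity, and you yourself call it the main obstacle. Neither of your two suggested attacks is carried out, and the first one fails as described: if you condition on the group containing the last item, the terms where item $m$ is merged into a group $G_0$ acquire the factor $\left\{-(k_{G_0}+k_m-1)\right\}^{\#G_0}$ in place of $\left\{-(k_{G_0}-1)\right\}^{\#G_0-1}$, so the resulting expression is not of the form $S_{m-1}$ for the same data --- either the total $d$ changes or the summands do not factor --- and the recursion does not close. The paper's resolution of exactly this difficulty is its Lemmas~\ref{lem4.9}--\ref{lem4.12}: one promotes the sum to a polynomial $g_{l,k}(X_1,\dots,X_l)=\sum_{\mathbb{J}\in\mathfrak{J}_l(k)}\prod_{J\in\mathbb{J}}\bigl\{-(\sum_{u\in J}X_u-1)\bigr\}^{\#J-1}$, proves the clean recursion only after substituting $X_{l+1}=0$ (Proposition~\ref{prop4.11}), and then shows --- by symmetry, the degree bound $\deg g_{l,k}=l-k\leq l$, and injectivity of the restriction $X_{l+1}=0$ on symmetric polynomials in $\sigma_{l+1,1},\dots,\sigma_{l+1,l}$ --- that $g_{l,k}$ depends on $X_1,\dots,X_l$ only through their sum. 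This is the genuinely non-obvious fact (your own phrase ``the total vanishes for every choice of them'' is a symptom of it) that converts the $X_{l+1}=0$ recursion into $f_{l+1,k}=f_{l,k-1}-(d-k)f_{l,k}$, after which a telescoping sum with the weights $(d-1)!/(d-k)!$ gives the vanishing. Your generating-function alternative may well work, but ``should then collapse to an Abel-type identity'' is not a proof; as it stands the central identity, to which the paper devotes essentially all of Section~\ref{proof1}, is asserted rather than established.
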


Theorem~\ref{mainthm1} is proved in Section~\ref{proof1}, whereas
an intuitional consideration of Theorem~\ref{mainthm1} is added in Appendix.

\begin{remark}
 By Theorem~\ref{mainthm1}, we no longer need induction processes to find $\#\left(\Phi_d^{-1}(\bar{\lambda})\right)$ if we consider $\Phi_d^{-1}(\bar{\lambda})$ counted with multiplicity.
We only need to find $\mathfrak{I}'(\lambda)$ and to compute straightforward the right-hand side of equation~(\ref{eq2.6}).

However, there are some minor defects in the form of equation~(\ref{eq2.6}) comparing with equation~(\ref{eq2.3}).
By equation~(\ref{eq2.3}), we can easily see the inequality $s_d(\lambda) \leq (d-2)!$; however, it cannot be easily seen by equation~(\ref{eq2.6}).
The sum of the absolute value
\[ 
	\sum_{\mathbb{I}\in \mathfrak{I}'(\lambda)} 
				\left( (d-1)^{\#\mathbb{I} - 2} 
				\cdot \prod_{I \in \mathbb{I}}\left( \#I - 1 \right)! \right)
\]
in the right-hand side of equation~(\ref{eq2.6}) can be much greater than $(d-2)!$.
\end{remark}

\begin{remark}
 Each term 
 in the right-hand side of equation~(\ref{eq2.5}) 
 \[
    \left\{ -(d-1) \right\}^{\#\mathbb{I} - 1} \cdot \prod_{I \in \mathbb{I}}\left( \#I - 1 \right)!
 \]
 is positive or negative, according to whether $\#\mathbb{I}$ is odd or even.
 Moreover, if $\mathbb{I} \in \mathfrak{I}'(\lambda)$ and $\mathbb{I}' \prec \mathbb{I}$,
 then we automatically have $\mathbb{I}' \in \mathfrak{I}'(\lambda)$.
 Hence, equation~(\ref{eq2.5}) is considered to be a kind of inclusion-exclusion formula.
\end{remark}

\begin{remark}
 Theorem~\ref{mainthm1} is derived from Theorem~\ref{thm2.3} with no extra information.
 Hence, the proof of Theorem~\ref{mainthm1} is self-contained and requires no prerequisites
 under the assumption of  Theorem~\ref{thm2.3},
 whereas its proof is highly non-trivial.
 The proof consists of a good deal of combinatorial argument.
\end{remark}

\section{Main result 2}\label{secmainresult2}

In this section, 
we proceed to the next step, in which we 
discuss the possibility of improving the algorithm for counting the number of {\it discrete} elements of $\Phi_d^{-1}(\bar{\lambda})$.
Therefore, in 
this section, $\Phi_d^{-1}(\bar{\lambda})$ is {\it not} considered counted with multiplicity; 
$\Phi_d^{-1}(\bar{\lambda})$ is considered to be a set.
In this setting,
we have already obtained an algorithm for counting the number of discrete elements of $\Phi_d^{-1}(\bar{\lambda})$ 
by using $\left\{s_{d'}(\lambda') \mid 2 \leq d' \leq d,\ \lambda' \in V_{d'} \right\}$
in the third and fourth steps in Main Theorem~III in~\cite{sugi1}.
To review the result more precisely and to discuss further properties, we first fix our notation.

The following objects are important in this section.

\begin{definition}\label{def3.1}
 For $\lambda = (\lambda_1, \dots, \lambda_d) \in V_d$, we put
 \[
    \mathfrak{K}(\lambda) := 
	\left\{
		K \ \left| \ 
	 \begin{matrix}
	   \emptyset \subsetneq K \subseteq \{1,\ldots,d \},\\
	   i,j \in K \Rightarrow \lambda_i = \lambda_j,\\
	   i\in K,\ j\in\{1,\ldots,d\}\setminus K\Longrightarrow\lambda_i\ne\lambda_j
	 \end{matrix}
	 \right.
	\right\}.
 \]
\end{definition}

Note that if we put $\mathfrak{K}(\lambda) =: \left\{ K_1,\dots, K_q \right\}$, then $K_1,\dots, K_q$ are mutually disjoint, and
the equality $K_1 \amalg \cdots \amalg K_q = \{1,\ldots,d\}$ holds by definition;
hence $\mathfrak{K}(\lambda)$ is a partition of $\{1,\ldots,d\}$.

\begin{definition}
 We denote the family of monic centered polynomials of degree $d$ by 
 \[
    \mathrm{MC}_d := \left\{\left. f(z) = z^d + \sum_{k=0}^{d-2}a_kz^k \ \right| \ a_k \in \mathbb{C} \ \ \text{for} \ \ 0 \leq k \leq d-2 \right\}.
 \]
 Moreover, we denote the composite mapping of $\mathrm{MC}_d \subset \mathrm{Poly}_d \twoheadrightarrow 
 \mathrm{Poly}_d / \mathrm{Aut}(\mathbb{C}) = \mathrm{MP}_d$ by $p : \mathrm{MC}_d \to \mathrm{MP}_d$, and also
  denote the composite mapping of $p: \mathrm{MC}_d \to \mathrm{MP}_d$ and 
 $\Phi_d : \mathrm{MP}_d \to \widetilde{\Lambda}_d$ by $\widehat{\Phi}_d : \mathrm{MC}_d \to \widetilde{\Lambda}_d$, that is, 
 $\widehat{\Phi}_d := \Phi_d \circ p$.
\end{definition}

In the above definition, the map $p$ is surjective since 
every affine conjugacy class of polynomial maps contains monic centered polynomials.
Moreover, two monic centered polynomials $f, g \in \mathrm{MC}_d$ are affinely conjugate 
if and only if there exists a $(d-1)$-th radical root $a$ of $1$ such that the equality $g(z) = a f(a^{-1}z)$ holds.
Hence, the group $\{ a \in \mathbb{C} \mid a^{d-1} = 1 \} \cong \mathbb{Z} /(d-1)\mathbb{Z}$
naturally acts on $\mathrm{MC}_d$, and the induced mapping 
$\overline{p} : \mathrm{MC}_d / \left( \mathbb{Z} /(d-1)\mathbb{Z} \right) \to \mathrm{MP}_d$ is an isomorphism.
Since $\mathrm{MC}_d \cong \mathbb{C}^{d-1}$, 
we also have $\mathrm{MP}_d \cong \mathbb{C}^{d-1} / \left( \mathbb{Z} /(d-1)\mathbb{Z} \right)$.
Here, the action of $\mathbb{Z} /(d-1)\mathbb{Z}$ on $\mathrm{MC}_d$ is {\it not} free for $d \geq 3$, and
$\mathrm{MP}_d$ has the set of singular points $\mathrm{Sing}(\mathrm{MP}_d)$ for $d \geq 4$.
Hence, in some sense, the map $p : \mathrm{MC}_d \to \mathrm{MP}_d$ can be considered 
to be a `desingularization' of $\mathrm{MP}_d$ for $d \geq 4$.

We already have the following theorem by Remark~1.9 in~\cite{sugi1}.

\begin{theorem}\label{thm3.3}
 For $d \geq 2$ and $\lambda \in V_d$, we put $\mathfrak{K}(\lambda) =: \left\{ K_1,\dots, K_q \right\}$ and
 denote by $g_w$ the greatest common divisor of 
 $\#K_1,\ldots,\#K_{(w-1)},(\#K_w)-1,\#K_{(w+1)},\ldots,\#K_q$
 for each $1\le w\le q$.
 If $g_w=1$ holds for every $1 \leq w \leq q$, then we have 
 \begin{equation}\label{eq3.1}
     \#\left(\Phi_d^{-1}(\bar{\lambda})\right) = \frac{s_d(\lambda)}{\left(\#K_1\right)!\dots\left(\#K_q\right)!}
     = \frac{s_d(\lambda)}{\prod_{K \in \mathfrak{K}(\lambda)}\left(\#K\right)!},
 \end{equation}
 where $s_d(\lambda)$ is the non-negative integer defined in Theorem~\ref{thm2.3} and rewritten in Theorem~\ref{mainthm1}.
 Here, note that $\Phi_d^{-1}(\bar{\lambda})$ is not considered counted with multiplicity, and
 hence $\#\left(\Phi_d^{-1}(\bar{\lambda})\right)$ denote the number of discrete elements of $\Phi_d^{-1}(\bar{\lambda})$.
\end{theorem}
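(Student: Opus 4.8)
The plan is to transport the computation from $\mathrm{MP}_d$ to the smooth space $\mathrm{MC}_d \cong \mathbb{C}^{d-1}$, using the isomorphism $\overline{p} : \mathrm{MC}_d / \left( \mathbb{Z}/(d-1)\mathbb{Z} \right) \xrightarrow{\sim} \mathrm{MP}_d$ recalled above. Write $G := \{ a \in \mathbb{C} \mid a^{d-1} = 1 \} \cong \mathbb{Z}/(d-1)\mathbb{Z}$ and let $G_f \subseteq G$ denote the stabilizer of $f \in \mathrm{MC}_d$. Since $\widehat{\Phi}_d^{-1}(\bar{\lambda}) = p^{-1}\left( \Phi_d^{-1}(\bar{\lambda}) \right)$ is a union of $G$-orbits, each class $[f] \in \Phi_d^{-1}(\bar{\lambda})$ is covered by an orbit of size $(d-1)/\#G_f$. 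I would prove the theorem in three steps: (i) a labeling identity that expresses $s_d(\lambda)$ as a count in $\mathrm{MC}_d$ weighted by $\prod_{K \in \mathfrak{K}(\lambda)}(\#K)!$; (ii) the key claim that the hypothesis $g_w = 1$ for all $w$ forces $G_f = \{1\}$ for every $f$ in the fiber; and (iii) combining these.

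For step (i), since $\lambda \in V_d$ no multiplier equals $1$, so every $f \in \widehat{\Phi}_d^{-1}(\bar{\lambda})$ has $d$ distinct fixed points whose multiset of multipliers is exactly $\bar{\lambda}$. Counting bijective labelings $\phi : \{1,\dots,d\} \to \mathrm{Fix}(f)$ with $f'(\phi(i)) = \lambda_i$, I observe that there are exactly $\prod_{K \in \mathfrak{K}(\lambda)}(\#K)!$ of them for each such $f$, since a labeling amounts to matching, separately within each multiplier value, the fixed points carrying that value to the indices demanding it. Theorem~\ref{thm2.3} computes $s_d(\lambda)$ as $\#\left(\Phi_d^{-1}(\bar{\lambda})\right)$ counted with multiplicity; combined with $\deg p = d-1$ and this labeling count, I would reconcile $s_d(\lambda)$ with the orbifold-weighted count $s_d(\lambda) = \prod_{K \in \mathfrak{K}(\lambda)}(\#K)! \cdot \sum_{[f] \in \Phi_d^{-1}(\bar{\lambda})} \frac{1}{\#G_f}$, the weight $1/\#G_f$ recording that a symmetric polynomial is counted with multiplicity below the naive labeling count.

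Step (ii) is the heart of the argument. An element $a \in G$ with $a \ne 1$ lies in $G_f$ if and only if $a\,f(a^{-1}z) = f(z)$, i.e. $f$ commutes with the scaling $z \mapsto a^{-1}z$; comparing coefficients, this holds precisely when $a^{k-1} = 1$ for every $k$ with a nonzero coefficient of $f$. Suppose such $a$ has order $m > 1$, so $m \mid d-1$. The relation $f(a^{-1}z) = a^{-1}f(z)$ shows that $z \mapsto a^{-1}z$ permutes $\mathrm{Fix}(f)$ and, upon differentiating, preserves multipliers; its only fixed point is $0$, which must itself be a fixed point of $f$, for otherwise all $d$ fixed points would lie in free $m$-orbits, forcing $m \mid d$ and hence $m \mid \gcd(d,d-1) = 1$. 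The remaining $d-1$ fixed points split into free orbits of size $m$, each carrying a single multiplier value. Letting $K_{w_0}$ be the block whose multiplier value is $f'(0)$, every block $K_w$ with $w \ne w_0$ is a union of $m$-orbits, so $m \mid \#K_w$, while $\#K_{w_0} = 1 + (\text{a multiple of } m)$, so $m \mid \#K_{w_0} - 1$. Hence $m \mid g_{w_0}$, contradicting $g_{w_0} = 1$. Because $\mathfrak{K}(\lambda)$ and the multiplier values depend only on $\bar{\lambda}$ while $w_0$ ranges over $\{1,\dots,q\}$ as $f$ varies, the hypothesis that $g_w = 1$ for all $w$ rules out every nontrivial stabilizer, giving $G_f = \{1\}$ throughout the fiber.

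Finally, in step (iii), with all $\#G_f = 1$ the identity of step (i) collapses to $s_d(\lambda) = \prod_{K \in \mathfrak{K}(\lambda)}(\#K)! \cdot \#\left(\Phi_d^{-1}(\bar{\lambda})\right)$, which is exactly~(\ref{eq3.1}); freeness of the $G$-action is what makes the discrete count in $\mathrm{MP}_d$ genuinely $1/(d-1)$ of that in $\mathrm{MC}_d$. I expect the main obstacle to be step (ii): translating a polynomial symmetry of $f$ into the arithmetic of the block sizes, and in particular isolating the distinguished block $K_{w_0}$ containing the multiplier at the scaling-fixed point $0$ and tracking the resulting ``$-1$'' shift, which is precisely what the definition of $g_w$ encodes. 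The secondary delicate point is the reconciliation in step (i), since $s_d(\lambda)$ must be matched against its recursive definition in Theorem~\ref{thm2.3} rather than recomputed from scratch.
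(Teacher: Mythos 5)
Your argument is correct, but note that the paper does not actually prove Theorem~\ref{thm3.3} here: it is imported wholesale from Remark~1.9 of~\cite{sugi1}. What you have written is in effect an independent reconstruction, and it runs closely parallel to the proof the paper does give for the companion statement, Theorem~\ref{mainthm2}, in Section~\ref{proof2}. Your step~(i) is essentially the content of that section: the set of pairs $(f,\phi)$ of a monic centered $f$ in the fiber together with a labeling of its fixed points is the paper's $\widetilde{\Sigma}_d(\lambda)$, your count of $\prod_{K\in\mathfrak{K}(\lambda)}(\#K)!$ labelings per $f$ is the free action of $\mathfrak{S}\left(\mathfrak{K}(\lambda)\right)$, and the factor $d-1$ is the covering $\widetilde{\Sigma}_d(\lambda)\to\Sigma_d(\lambda)$. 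Your step~(ii) --- a nontrivial $a$ in the stabilizer forces $0\in\mathrm{Fix}(f)$ since otherwise $m\mid d$ and $m\mid d-1$, and then the remaining $d-1$ fixed points fall into free $m$-orbits of constant multiplier, whence $m\mid g_{w_0}$ --- is the genuinely additional ingredient relative to Section~\ref{proof2}, it is correct, and it is exactly what the gcd hypothesis encodes; the paper never reproves this, delegating it to~\cite{sugi1}. The one point you cannot close from the statements of Sections~\ref{secmainresult1}--\ref{secmainresult2} alone is the precise meaning of ``counted with multiplicity'': to establish your orbifold identity $s_d(\lambda)=\prod_{K\in\mathfrak{K}(\lambda)}(\#K)!\cdot\sum_{[f]}1/\#G_f$ you need the multiplicity of $[f]$ to be the fiber cardinality of $\pi(\lambda):\Sigma_d(\lambda)\to\Phi_d^{-1}(\bar{\lambda})$, i.e.\ Proposition~\ref{prop5.1} (Propositions~4.3 and~9.1 of~\cite{sugi1}); with that input your step~(i) is the computation $\#\left(\pi(\lambda)^{-1}([f])\right)=\prod_{K\in\mathfrak{K}(\lambda)}(\#K)!/\#G_f$. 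You flag this dependence yourself, and modulo that citation the proof is complete; if anything it makes the role of the condition $g_w=1$ more transparent than the paper does.
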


In the case where $g_w \geq 2$ for some $w$, we also have an algorithm 
for finding $\#\left(\Phi_d^{-1}(\bar{\lambda})\right)$ in the third and fourth steps in Main Theorem~III in~\cite{sugi1}.
However, it contains induction processes and
 is much more complicated than equation~(\ref{eq3.1});
and hence we omit to describe it again in this paper.

As we already mentioned in Remark~1.9 in~\cite{sugi1},
we find that for $d \geq 4$ and for $\lambda \in V_d$, the inequality $g_w \geq 2$ holds for some $w$ 
only if $\bar{\lambda} \in \Phi_d(\mathrm{Sing}(\mathrm{MP}_d))$.
Since $\mathrm{MC}_d$ is a `desingularization' of $\mathrm{MP}_d$, it is natural to expect that 
the map
$\widehat{\Phi}_d = \Phi_d\circ p : \mathrm{MC}_d \to \widetilde{\Lambda}_d$
is simpler than the map $\Phi_d : \mathrm{MP}_d \to \widetilde{\Lambda}_d$ itself.
In the following, we consider $\mathrm{MC}_d$ instead of $\mathrm{MP}_d$, and 
also consider $\widehat{\Phi}_d : \mathrm{MC}_d \to \widetilde{\Lambda}_d$ instead of 
$\Phi_d : \mathrm{MP}_d \to \widetilde{\Lambda}_d$.

We now state the second main theorem in this paper.

\begin{mtheorem}\label{mainthm2}
 For $d \geq2$, $\lambda \in V_d$ and $\widehat{\Phi}_d : \mathrm{MC}_d \to \widetilde{\Lambda}_d$, we have
 \begin{equation}\label{eq3.2}
    \#\left( \widehat{\Phi}_d^{-1}(\bar{\lambda}) \right) 
    = \frac{(d-1)s_d(\lambda)}{\prod_{K \in \mathfrak{K}(\lambda)}\left(\#K\right)!},
 \end{equation}
 where $s_d(\lambda)$ is the non-negative integer defined in Theorem~\ref{thm2.3} and rewritten in Theorem~\ref{mainthm1}.
 Here, note that $\widehat{\Phi}_d^{-1}(\bar{\lambda})$ is not considered counted with multiplicity, and
 hence $\#\left(\widehat{\Phi}_d^{-1}(\bar{\lambda})\right)$ denotes the number of discrete elements of $\widehat{\Phi}_d^{-1}(\bar{\lambda})$.
\end{mtheorem}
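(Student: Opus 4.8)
The plan is to convert the multiplicity data carried by $s_d(\lambda)$ on the \emph{singular} space $\mathrm{MP}_d$ into an honest \emph{discrete} count on the \emph{smooth} space $\mathrm{MC}_d \cong \mathbb{C}^{d-1}$, exploiting exactly the ``desingularization'' feature of $p : \mathrm{MC}_d \to \mathrm{MP}_d$. Since $\widehat{\Phi}_d = \Phi_d \circ p$, we first record the set-theoretic identity $\widehat{\Phi}_d^{-1}(\bar{\lambda}) = p^{-1}\!\left(\Phi_d^{-1}(\bar{\lambda})\right)$. For an affine conjugacy class $c \in \Phi_d^{-1}(\bar{\lambda})$, let $\mathrm{Aut}(c) \subseteq \mathbb{Z}/(d-1)\mathbb{Z}$ be the stabilizer of a monic centered representative under the action $f(z) \mapsto a f(a^{-1}z)$. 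By the orbit--stabilizer theorem, $c$ has exactly $(d-1)/\#\mathrm{Aut}(c)$ monic centered representatives, so that $\#\!\left(\widehat{\Phi}_d^{-1}(\bar{\lambda})\right) = \sum_{c} (d-1)/\#\mathrm{Aut}(c)$, the sum running over the discrete classes $c \in \Phi_d^{-1}(\bar{\lambda})$.

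The whole theorem then reduces to the following key lemma. Writing $\nu(c)$ for the multiplicity of $c$ in $\Phi_d^{-1}(\bar{\lambda})$, so that $s_d(\lambda) = \sum_c \nu(c)$ by Theorem~\ref{thm2.3}, I would prove that
\[
   \#\mathrm{Aut}(c) \cdot \nu(c) = \prod_{K \in \mathfrak{K}(\lambda)}(\#K)! \qquad \text{for every } c \in \Phi_d^{-1}(\bar{\lambda}).
\]
Granting this, one gets $\sum_c 1/\#\mathrm{Aut}(c) = \sum_c \nu(c)\big/ \prod_K(\#K)! = s_d(\lambda)\big/\prod_K(\#K)!$, and multiplying by $d-1$ yields the formula~(\ref{eq3.2}). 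I note that the special case $\#\mathrm{Aut}(c)=1$ for all $c$ is precisely the hypothesis $g_w=1$ of Theorem~\ref{thm3.3}, so the key lemma simultaneously recovers that result and extends it to all $\bar{\lambda}$.

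To prove the key lemma I would compute the local multiplicity $\mathrm{mult}_f\!\left(\widehat{\Phi}_d\right)$ of $\widehat{\Phi}_d$ at a representative $f$ of $c$ (which lies on the locus of distinct fixed points, since $\lambda \in V_d$) in two ways, using that $\mathrm{MC}_d$ is smooth. On one hand, setting $G_f = \mathrm{Aut}(f)$ and $t = \#G_f$, the map $p$ is analytically-locally the quotient $\mathrm{MC}_d \to \mathrm{MC}_d/G_f$, totally ramified at $f$ of degree $t$; pulling back the length-$\nu(c)$ fibre of $\Phi_d$ through this quotient gives $\mathrm{mult}_f\!\left(\widehat{\Phi}_d\right) = t\cdot\nu(c)$. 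On the other hand, choosing a local holomorphic labelling of the $d$ distinct fixed points produces a lift $\phi_f : \mathrm{MC}_d \to \Lambda_d$ of $\widehat{\Phi}_d$ along the quotient $\pi : \Lambda_d \to \widetilde{\Lambda}_d = \Lambda_d/\mathfrak{S}_d$; since the scheme-theoretic fibre of $\pi$ over $\bar{\lambda}$ has length $\#\mathrm{Stab}_{\mathfrak{S}_d}(\lambda) = \prod_K(\#K)!$ at $\phi_f(f)$, one obtains $\mathrm{mult}_f\!\left(\widehat{\Phi}_d\right) = \deg_f(\phi_f)\cdot\prod_K(\#K)!$. Comparing the two expressions gives the key lemma as soon as $\deg_f(\phi_f)=1$.

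Thus the main obstacle is to show that $\phi_f$ is \emph{\'etale} at every $f$ with distinct fixed points, equivalently that the Jacobian of the ordered fixed-point-multiplier map $(a_0,\dots,a_{d-2}) \mapsto (\lambda_1,\dots,\lambda_d)$ is nonsingular there (as a map of the $(d-1)$-dimensional spaces $\mathrm{MC}_d$ and $\Lambda_d$). This is where the smoothness of $\mathrm{MC}_d$ is essential, and it is the genuinely delicate point: the analogous statement fails on $\mathrm{MP}_d$ exactly at the automorphism (singular) points, which is why $s_d(\lambda)$ alone carries the awkward factor $1/\#\mathrm{Aut}(c)$. I expect to establish \'etaleness by differentiating the fixed-point relations $f(\zeta_i)=\zeta_i$ and $f'(\zeta_i)=\lambda_i$ and producing an explicit factorization of the Jacobian determinant in terms of the $\zeta_i$ and $\lambda_i$, which is nonzero precisely when the $\zeta_i$ are distinct and no $\lambda_i$ equals $1$; this would complete the proof.
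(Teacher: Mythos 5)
Your reduction of the theorem to the key lemma $\#\mathrm{Aut}(c)\cdot\nu(c)=\prod_{K\in\mathfrak{K}(\lambda)}(\#K)!$ is correct, and that lemma is in fact true; the orbit--stabilizer bookkeeping on $p^{-1}(c)$ is also fine. The gap is in your proposed proof of the key lemma, which rests on three unestablished (and in one case false-as-stated) ingredients. First, the step ``pulling back the length-$\nu(c)$ fibre through the totally ramified degree-$t$ quotient gives $\mathrm{mult}_f(\widehat{\Phi}_d)=t\cdot\nu(c)$'' is not valid for quotient maps in dimension $\geq 2$: the quotient $\mathrm{MC}_d\to\mathrm{MC}_d/G_f$ near a fixed point of $G_f$ is finite but not flat onto a singular target, and lengths do not multiply by $t$ there (already for $\mathbb{C}^2\to\mathbb{C}^2/\{\pm 1\}$ the reduced cone point pulls back to the length-$3$ scheme $(x^2,xy,y^2)$, not length $2$). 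This failure occurs exactly at the classes with $\#\mathrm{Aut}(c)>1$, i.e.\ exactly where the lemma has content. Second, the identification of the paper's multiplicity $\nu(c)$ with a scheme-theoretic local length of the fibre of $\Phi_d$ is itself something that must be proved: in the source paper the multiplicity is defined via a parametrizing set of fixed-point configurations, not via fibre lengths. Third, the \'etaleness of the local lift $\phi_f$ is flagged by you as the delicate point but is only asserted, with a strategy rather than a proof; as written the argument terminates in an IOU.

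The paper sidesteps all of this by never working with lengths on the singular space. It uses the normalized configuration space $\widetilde{\Sigma}_d(\lambda)\subset\mathbb{C}^d$ of ordered fixed-point candidates satisfying $\sum_i\zeta_i=0$, $\sum_i\zeta_i^k/(1-\lambda_i)=0$ for $k\leq d-2$ and $\sum_i\zeta_i^{d-1}/(1-\lambda_i)=-1$. This is an honest finite set of cardinality $(d-1)s_d(\lambda)$ (the factor $d-1$ coming from the affine normalization of the projective set $\Sigma_d(\lambda)$, whose cardinality is $s_d(\lambda)$ by the earlier work), it surjects onto $\widehat{\Phi}_d^{-1}(\bar{\lambda})$ via $\zeta\mapsto z+\prod_i(z-\zeta_i)$, and the group $\mathfrak{S}(\mathfrak{K}(\lambda))\cong\prod_K\mathfrak{S}_{\#K}$ acts \emph{freely} on it with orbits equal to the fibres of this surjection. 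The count is then a set-theoretic orbit count, $(d-1)s_d(\lambda)/\prod_K(\#K)!$, with no flatness, multiplicity, or Jacobian issues. If you want to salvage your route, you would need to either prove your \'etaleness claim and replace the quotient-length step by an argument on $\widetilde{\Sigma}_d(\lambda)$ (at which point you have essentially reconstructed the paper's proof), or verify the key lemma directly from the definition of $\nu(c)$ as the fibre cardinality of $\pi(\lambda):\Sigma_d(\lambda)\to\Phi_d^{-1}(\bar{\lambda})$.
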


Theorem~\ref{mainthm2} is proved in Section~\ref{proof2}.

\begin{remark}
 Theorem~\ref{mainthm2} holds for {\it every} $\lambda \in V_d$ with no 
 exception, and
 has no induction process.
 Hence, we can say that the fiber structure of the map $\widehat{\Phi}_d : \mathrm{MC}_d \to \widetilde{\Lambda}_d$ 
 is simpler than the fiber structure of the map $\Phi_d: \mathrm{MP}_d \to \widetilde{\Lambda}_d$, 
 or moreover we can also say that the complexity of the map $\Phi_d: \mathrm{MP}_d \to \widetilde{\Lambda}_d$ is composed of the two complexities: 
 one of them is the complexity of the map  $\widehat{\Phi}_d : \mathrm{MC}_d \to \widetilde{\Lambda}_d$, 
 and the other is the complexity of the map $p: \mathrm{MC}_d \to \mathrm{MP}_d$.
 Therefore, in some sense, 
 consideration of the map $\widehat{\Phi}_d$ 
 is more essential than that of the map $\Phi_d$ in the study of fixed-point multipliers for polynomial maps.
\end{remark}

\begin{remark}
 Theorem~\ref{mainthm2} is proved by a closer look at Propositions~4.3 and~9.1 in~\cite{sugi1}.
\end{remark}

Combining Theorems~\ref{mainthm1} and~\ref{mainthm2}, we have the following.

\begin{mcorollary}\label{cor3}
 For $d \geq2$, $\lambda \in V_d$ and $\widehat{\Phi}_d : \mathrm{MC}_d \to \widetilde{\Lambda}_d$, we have
 \[
    \#\left( \widehat{\Phi}_d^{-1}(\bar{\lambda}) \right) 
    = \frac{\sum_{\mathbb{I}\in \mathfrak{I}'(\lambda)} \left( \left\{ -(d-1) \right\}^{\#\mathbb{I} - 1} 
	\cdot \prod_{I \in \mathbb{I}}\left( \#I - 1 \right)! \right)}{\prod_{K \in \mathfrak{K}(\lambda)}\left(\#K\right)!}.
 \]
\end{mcorollary}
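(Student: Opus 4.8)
The plan is short, since Corollary~\ref{cor3} is a formal consequence of the two main theorems and requires no new argument of its own. First I would recall from Theorem~\ref{mainthm2} that the fiber count over $\bar{\lambda}$ is given by equation~(\ref{eq3.2}), namely
$\#\!\left(\widehat{\Phi}_d^{-1}(\bar{\lambda})\right) = (d-1)s_d(\lambda)\big/\prod_{K\in\mathfrak{K}(\lambda)}(\#K)!$.
Observe that the right-hand side of the claimed identity differs from this expression only in its numerator, where the factor $(d-1)s_d(\lambda)$ has been replaced by an explicit sum indexed by $\mathfrak{I}'(\lambda)$. Thus the whole task reduces to rewriting that single numerator in closed form.

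The key step is then to invoke Theorem~\ref{mainthm1}, whose equation~(\ref{eq2.5}) asserts precisely that
\[
  (d-1)s_d(\lambda) = \sum_{\mathbb{I}\in\mathfrak{I}'(\lambda)}\left(\{-(d-1)\}^{\#\mathbb{I}-1}\cdot\prod_{I\in\mathbb{I}}(\#I-1)!\right).
\]
Substituting this expression for the numerator $(d-1)s_d(\lambda)$ into the formula supplied by Theorem~\ref{mainthm2} reproduces the stated identity verbatim, which completes the proof.

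There is essentially no obstacle at the level of the corollary itself: the argument is a single substitution, involving neither induction nor any fresh combinatorial input. All of the genuine difficulty has already been absorbed into the two results being combined --- the delicate combinatorial identity~(\ref{eq2.5}) behind Theorem~\ref{mainthm1}, and the analysis of the $\mathbb{Z}/(d-1)\mathbb{Z}$-action on $\mathrm{MC}_d$ underlying Theorem~\ref{mainthm2}. The only point I would pause to verify is the degenerate range $d=2,3$, where $\mathfrak{I}(\lambda)=\emptyset$ so that $\mathfrak{I}'(\lambda)=\{\mathbb{I}_0\}$ consists solely of the minimum element; there the sum collapses to the single term $(d-1)!$ (since $\#\mathbb{I}_0=1$), and both sides reduce to $(d-1)!\big/\prod_{K}(\#K)!$, consistent with $s_d(\lambda)=(d-2)!$ and with the known bijectivity of $\Phi_d$ in those low-degree cases.
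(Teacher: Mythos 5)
Your proof is correct and matches the paper exactly: the paper derives Corollary~\ref{cor3} precisely by combining Theorem~\ref{mainthm2}'s formula $(d-1)s_d(\lambda)\big/\prod_{K\in\mathfrak{K}(\lambda)}(\#K)!$ with the closed form for $(d-1)s_d(\lambda)$ given in equation~(\ref{eq2.5}) of Theorem~\ref{mainthm1}. The substitution is the whole argument, and your sanity check for $d=2,3$ is consistent with the paper's remarks.
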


\section{Proof  of Theorem~\ref{mainthm1}}\label{proof1}

In this section, we prove Theorem~\ref{mainthm1}.
We assume $d \geq 2$ and $\lambda=(\lambda_1,\dots,\lambda_d) \in V_d$, and denote by $\mathbb{I}_0 = \{\{ 1,\dots,d \}\}$
the minimum element of $\mathfrak{I}'(\lambda)$, which are fixed throughout this section.

First we put
\[
   e_{\mathbb{I}_0}(\lambda) := (d-1)s_d(\lambda)
\]
for $\mathbb{I}_0 = \{\{ 1,\dots,d \}\} \in \mathfrak{I}'(\lambda)$.
Then, equation~(\ref{eq2.2}) for $\mathbb{I} \in \mathfrak{I}(\lambda)$ is rewritten in the form
\begin{equation}\label{eq4.1}
     e_{\mathbb{I}}(\lambda)
   = \prod_{I \in \mathbb{I}} e_{\{I\}}(\lambda_I).
\end{equation}
Here, $\{I\}$ denotes the minimum element of $\mathfrak{I}'(\lambda_I)$.
Moreover, equation~(\ref{eq2.3}) is rewritten in the form
\begin{equation}\label{eq4.2}
     e_{\mathbb{I}_0}(\lambda) = (d-1)!
	  - \sum_{\mathbb{I} \in \mathfrak{I}(\lambda)} \left(
          e_{\mathbb{I}}(\lambda)\cdot
          \prod_{k=d-\# \mathbb{I} +1}^{d-1}k \right),
\end{equation}
which is also equivalent to the equality
\begin{equation*}
 (d-1)! = \sum_{\mathbb{I} \in \mathfrak{I}'(\lambda)} \left(
          e_{\mathbb{I}}(\lambda)\cdot
          \prod_{k=d-\# \mathbb{I} +1}^{d-1}k \right)
\end{equation*}
since for $\mathbb{I}_0 \in \mathfrak{I}'(\lambda)$, we have 
$e_{\mathbb{I}_0}(\lambda)\cdot \prod_{k=d-\# \mathbb{I}_0 +1}^{d-1}k = e_{\mathbb{I}_0}(\lambda)\cdot \prod_{k=d}^{d-1}k = e_{\mathbb{I}_0}(\lambda)$.
Equation~(\ref{eq2.5}), which we would like to prove in this section, is also rewritten in the form
\stepcounter{equation}
\begin{equation}\label{eq4.4}\tag*{$(\theequation)_d$}
    e_{\mathbb{I}_0}(\lambda) = \sum_{\mathbb{I}\in \mathfrak{I}'(\lambda)} 
				\left( \left\{ -(d-1) \right\}^{\#\mathbb{I} - 1} 
				\cdot \prod_{I \in \mathbb{I}}\left( \#I - 1 \right)! \right).
\end{equation}
Hence, to prove Theorem~\ref{mainthm1}, it suffices to derive equation~\ref{eq4.4} from equations~(\ref{eq4.1}) and~(\ref{eq4.2}).

In the following, we show equation~\ref{eq4.4} by induction on $d$.

For $d=2$ or $3$, we have $s_d(\lambda)=1$ and $\mathfrak{I}'(\lambda) = \{ \mathbb{I}_0 \}$ for every $\lambda \in V_d$.
Hence, for $\lambda \in V_d$, we always have 
\[
  e_{\mathbb{I}_0}(\lambda) = (d-1)s_d(\lambda) = d-1
\]
and also have
\begin{align*}
   \sum_{\mathbb{I}\in \mathfrak{I}'(\lambda)} 
				\left( \left\{ -(d-1) \right\}^{\#\mathbb{I} - 1} 
				\cdot \prod_{I \in \mathbb{I}}\left( \#I - 1 \right)! \right)
  &= \left\{ -(d-1) \right\}^{\#\mathbb{I}_0 - 1} \cdot \prod_{I \in \mathbb{I}_0}\left( \#I - 1 \right)!\\
  &= \left\{ -(d-1) \right\}^{1 - 1} \cdot \left( d - 1 \right)! = (d-1)!.
\end{align*}
Since $d-1 = (d-1)!$ for $d=2$ or $3$, we 
have equations~$(\theequation)_2$ and~$(\theequation)_3$.

In the following, we assume $d \geq 4$ and show equation~\ref{eq4.4} by the assumption of
equations~$(\theequation)_2, (\theequation)_3, \dots, (\theequation)_{d-1}$, (\ref{eq4.1}) and~(\ref{eq4.2}).

For each $\mathbb{I} \in \mathfrak{I}(\lambda)$ with $\lambda \in V_d$, 
we put $\mathbb{I}=:\left\{ I_1,\dots,I_l \right\}$. Then,
by using equations~(\ref{eq4.1}) and~$(\theequation)_{d'}$ for $2 \leq d' < d$,
we have the following equalities:
\begin{equation}\label{eq4.5}\begin{split}
  e_{\mathbb{I}}(\lambda)
  &= \prod_{I \in \mathbb{I}} e_{\{I\}}(\lambda_I) 
    = \prod_{u=1}^l e_{\{I_u\}}(\lambda_{I_u}) \\
  &= \prod_{u=1}^l \left(
			\sum_{\mathbb{I}'_u\in \mathfrak{I}'(\lambda_{I_u})} 
			\left[ \left\{ -(\#I_u-1) \right\}^{\#\mathbb{I}'_u - 1} 
			\cdot \prod_{I'_u \in \mathbb{I}'_u}\left( \#I'_u - 1 \right)! \right]
		\right) \\
 &= \sum_{\mathbb{I}'_1\in \mathfrak{I}'(\lambda_{I_1})} \cdots \sum_{\mathbb{I}'_l\in \mathfrak{I}'(\lambda_{I_l})}
	\prod_{u=1}^l \left[
		\left\{ -(\#I_u-1) \right\}^{\#\mathbb{I}'_u - 1} 
			\cdot \prod_{I'_u \in \mathbb{I}'_u}\left( \#I'_u - 1 \right)!
	\right] \\
 &= \sum_\textrm{\scriptsize $\begin{matrix}
				       \mathbb{I}' \in \mathfrak{I}(\lambda) \\
				       \mathbb{I}' \succ \mathbb{I}
				     \end{matrix}$}
	\left[
		\left(
			\prod_{I' \in \mathbb{I}'}\left( \#I' - 1 \right)! 
		\right)
		\cdot
		\left(
			\prod_{u=1}^l \left\{ -(\#I_u-1) \right\}^{\chi_{I_u}\left( \mathbb{I}' \right) - 1}
		\right)
	\right]
\end{split}\end{equation}
since we have the equality
\[
   \left\{ \mathbb{I}'_1 \amalg \cdots \amalg \mathbb{I}'_l \mid 
	\mathbb{I}'_1 \in \mathfrak{I}'\left( \lambda_{I_1} \right), \dots, \mathbb{I}'_l \in \mathfrak{I}'\left( \lambda_{I_l} \right)
   \right\}
  = \left\{ \mathbb{I}' \in \mathfrak{I}(\lambda)  \mid  \mathbb{I}' \succ \mathbb{I}
   \right\}
\]
by definition.
Here, since $\mathbb{I} \succ \mathbb{I}$ holds for $\mathbb{I} \in \mathfrak{I}(\lambda)$, we have
$\mathbb{I} \in \left\{ \mathbb{I}' \in \mathfrak{I}(\lambda)  \mid  \mathbb{I}' \succ \mathbb{I} \right\}$.
Note that in equation~(\ref{eq4.5}), 
$\chi_{I_u}\left( \mathbb{I}' \right) = \#\left( \{ I' \in \mathbb{I}' \mid I' \subseteq I_u \} \right)$ is the function defined 
in Theorem~\ref{thm2.5}. 

Substituting equation~(\ref{eq4.5}) into equation~(\ref{eq4.2}), we have
\begin{equation}\label{eq4.6}\begin{split}
 &e_{\mathbb{I}_0}(\lambda) \\
 &= (d-1)!
	  - \sum_{\mathbb{I} \in \mathfrak{I}(\lambda)} 
			\left\{ 
				\sum_\textrm{\scriptsize $\begin{matrix}
				       \mathbb{I}' \in \mathfrak{I}(\lambda) \\
				       \mathbb{I}' \succ \mathbb{I}
				     \end{matrix}$}
				\left(
					\prod_{I' \in \mathbb{I}'}\left( \#I' - 1 \right)! 
				\right)
				\cdot
				\left(
					\prod_{I \in \mathbb{I}} \left\{ -(\#I-1) \right\}^{\chi_{I}\left( \mathbb{I}' \right) - 1}
				\right)
			\right\}
			\cdot \prod_{k=d-\# \mathbb{I} +1}^{d-1}k 
		\\
 &= (d-1)!
	- \sum_{\mathbb{I}' \in \mathfrak{I}(\lambda)}
		\left\{
			\prod_{I' \in \mathbb{I}'}\left( \#I' - 1 \right)!
		\right\} \cdot
		\left\{
			\sum_\textrm{\scriptsize $\begin{matrix}
						\mathbb{I} \in \mathfrak{I}(\lambda) \\
						\mathbb{I} \prec \mathbb{I}'
						\end{matrix}$}
			\left(
				\prod_{I \in \mathbb{I}} \left\{ -(\#I-1) \right\}^{\chi_{I}\left( \mathbb{I}' \right) - 1}
			\right)
			\cdot
			\prod_{k=d-\# \mathbb{I} +1}^{d-1}k
		\right\}
		.
\end{split}\end{equation}

On the other hand, equation~\ref{eq4.4}, which we would like to prove in this section, is equivalent to the equality
\begin{equation*}
 e_{\mathbb{I}_0}(\lambda) = (d-1)! + \sum_{\mathbb{I}\in \mathfrak{I}(\lambda)} 
				\left( \left\{ -(d-1) \right\}^{\#\mathbb{I} - 1} 
				\cdot \prod_{I \in \mathbb{I}}\left( \#I - 1 \right)! \right),
\end{equation*}
which is also equivalent to
\begin{equation}\label{eq4.7}
 e_{\mathbb{I}_0}(\lambda) = (d-1)! + \sum_{\mathbb{I}'\in \mathfrak{I}(\lambda)} 
				\left[ 
					\left\{\prod_{I' \in \mathbb{I}'}\left( \#I' - 1 \right)!\right\} 
					\cdot 
					\left\{ -(d-1) \right\}^{\#\mathbb{I}' - 1} 
				\right].
\end{equation}
Hence, comparing equations~(\ref{eq4.6}) and~(\ref{eq4.7}), we find that to prove equation~\ref{eq4.4}, 
we only need to show the following equality for each $\mathbb{I}' \in \mathfrak{I}(\lambda)$:
\begin{equation}\label{eq4.8}
 \left\{ -(d-1) \right\}^{\#\mathbb{I}' - 1} 
 =
	- \sum_\textrm{\scriptsize $\begin{matrix}
						\mathbb{I} \in \mathfrak{I}(\lambda) \\
						\mathbb{I} \prec \mathbb{I}'
						\end{matrix}$}
	\left(
		\prod_{I \in \mathbb{I}} \left\{ -(\#I-1) \right\}^{\chi_{I}\left( \mathbb{I}' \right) - 1}
	\right)
	\cdot
	\prod_{k=d-\# \mathbb{I} +1}^{d-1}k.
\end{equation}
Here, equation~(\ref{eq4.8}) is equivalent to the equality
\begin{equation}\label{eq4.9}
	\sum_\textrm{\scriptsize $\begin{matrix}
						\mathbb{I} \in \mathfrak{I}'(\lambda) \\
						\mathbb{I} \prec \mathbb{I}'
						\end{matrix}$}
	\left(
	\prod_{k=d-\# \mathbb{I} +1}^{d-1}k
	\right)
	\cdot
	\left(
		\prod_{I \in \mathbb{I}} \left\{ -(\#I-1) \right\}^{\chi_{I}\left( \mathbb{I}' \right) - 1}
	\right)
	= 0
\end{equation}
since for $\mathbb{I}_0 \in \mathfrak{I}'(\lambda)$ and $\mathbb{I}' \in \mathfrak{I}(\lambda)$, 
we have $\mathbb{I}_0 \prec \mathbb{I}'$ and 
\[
  \left(
  	\prod_{k=d-\# \mathbb{I}_0 +1}^{d-1}k
  \right)
  \cdot
\prod_{I \in \mathbb{I}_0} \left\{ -(\#I-1) \right\}^{\chi_{I}\left( \mathbb{I}' \right) - 1}
  =
  \left( \prod_{k=d}^{d-1}k \right)  \cdot  \left\{ -(d-1) \right\}^{\#\mathbb{I}' - 1}
  = \left\{ -(d-1) \right\}^{\#\mathbb{I}' - 1}.
\]
Hence, to prove Theorem~\ref{mainthm1}, we only need to show equation~(\ref{eq4.9})
for every $d \geq 4$, $\lambda \in V_d$ and 
$\mathbb{I}' \in \mathfrak{I}(\lambda)$.
In the following, instead of expressing $\sum_{\mathbb{I} \in \mathfrak{I}'(\lambda),\, \mathbb{I} \prec \mathbb{I}'}$ 
for $\mathbb{I}' \in \mathfrak{I}(\lambda)$, we simply express $\sum_{\mathbb{I} \prec \mathbb{I}'}$,
because if $\mathbb{I}$ is a partition of $\{1,\dots,d\}$ and 
$\mathbb{I} \prec \mathbb{I}'$ for $\mathbb{I}' \in \mathfrak{I}(\lambda)$,
then we automatically have $\mathbb{I} \in \mathfrak{I}'(\lambda)$.

\vspace{15pt}

To prove equation~(\ref{eq4.9}), we make use of the following.
\begin{definition}\label{def4.1}
 For $\mathbb{I}' \in \mathfrak{I}(\lambda)$ with $\#\mathbb{I}'=l$ and for $k \in \mathbb{Z}$, we put
 \[
    f_{l, k} := \sum_{\mathbb{I} \prec \mathbb{I}',\, \#\mathbb{I}=k} \,
	\prod_{I \in \mathbb{I}} \left\{ -(\#I-1) \right\}^{\chi_{I}\left( \mathbb{I}' \right) - 1}.
 \]
\end{definition}
\begin{remark}
 For $\mathbb{I}' \in \mathfrak{I}(\lambda)$ with $\#\mathbb{I}'=l$ and for $\mathbb{I} \prec \mathbb{I}'$,
 we always have $1 \leq \#\mathbb{I} \leq l$.
 Hence, if $k \leq 0$ or $k \geq l+1$, then we have $f_{l, k}=0$ by definition.
\end{remark}
\begin{example}
 Let us find $f_{l,l}$ and $f_{l,1}$ for $l \geq 2$ in this example.

 Since $\left\{ \mathbb{I} \mid \mathbb{I} \prec \mathbb{I}',\, \#\mathbb{I}=l \right\} = \left\{ \mathbb{I}' \right\}$,
 we have
 \[
	f_{l, l} = \prod_{I \in \mathbb{I}'} \left\{ -(\#I-1) \right\}^{\chi_{I}\left( \mathbb{I}' \right) - 1}
		= \prod_{I \in \mathbb{I}'} \left\{ -(\#I-1) \right\}^{1 - 1}
		= 1,
\]
 whereas
 since $\left\{ \mathbb{I} \mid \mathbb{I} \prec \mathbb{I}',\, \#\mathbb{I}=1 \right\} = \left\{ \mathbb{I}_0 \right\}$,
 we have 
 \[
	f_{l,1} = \prod_{I \in \mathbb{I}_0} \left\{ -(\#I-1) \right\}^{\chi_{I}\left( \mathbb{I}' \right) - 1}
		= \left\{ -(d-1) \right\}^{l - 1}.
 \]
\end{example}
\begin{example}\label{ex4.4}
 Let us also find $f_{4,2}$ in this example.
 For $\mathbb{I}' \in \mathfrak{I}(\lambda)$ with $\#\mathbb{I}'=4$, we can put $\mathbb{I}'=\{ I_1, I_2, I_3, I_4 \}$,
 and in this expression, we have 
 $\left\{ \mathbb{I} \mid \mathbb{I} \prec \mathbb{I}',\, \#\mathbb{I}=2 \right\} = \left\{ \mathbb{I}_1,\dots, \mathbb{I}_7 \right\}$,
 where
 \begin{align*}
     &\mathbb{I}_1=\left\{ I_1,\ I_2\amalg I_3\amalg I_4 \right\},\quad
     \mathbb{I}_2=\left\{ I_2,\ I_1\amalg I_3\amalg I_4 \right\},\\
     &\mathbb{I}_3=\left\{ I_3,\ I_1\amalg I_2\amalg I_4 \right\},\quad
     \mathbb{I}_4=\left\{ I_4,\ I_1\amalg I_2\amalg I_3 \right\},\\
     &\mathbb{I}_5=\left\{ I_1\amalg I_2,\ I_3\amalg I_4 \right\},\quad
     \mathbb{I}_6=\left\{ I_1\amalg I_3,\ I_2\amalg I_4 \right\}\quad \text{and} \quad
     \mathbb{I}_7=\left\{ I_1\amalg I_4,\ I_2\amalg I_3 \right\}.
 \end{align*}
 We put $\#I_u=:i_u$ for $1 \leq u \leq 4$. Note that the equality $i_1 + i_2 + i_3 + i_4 = d$ holds.
 We have
 \begin{align*}
     &\prod_{I \in \mathbb{I}_1} \left\{ -(\#I-1) \right\}^{\chi_{I}\left( \mathbb{I}' \right) - 1}
     = \left\{ -(i_1-1) \right\}^{1-1} \cdot \left\{ -(i_2+i_3+i_4-1) \right\}^{3-1},\\
     &\prod_{I \in \mathbb{I}_5} \left\{ -(\#I-1) \right\}^{\chi_{I}\left( \mathbb{I}' \right) - 1}
     = \left\{ -(i_1+i_2-1) \right\}^{2-1} \cdot \left\{ -(i_3+i_4-1) \right\}^{2-1}
 \end{align*}
 for instance, which implies
 \begin{align*}
     \sum_{u=1}^4 \prod_{I \in \mathbb{I}_u} \left\{ -(\#I-1) \right\}^{\chi_{I}\left( \mathbb{I}' \right) - 1}
     &= \sum_{u=1}^4 (i_1+i_2+i_3+i_4 - i_u-1)^2
     = \sum_{u=1}^4 (d - i_u-1)^2\\
     &= 4(d-1)^2 - 2(d-1)d  + \sum_{u=1}^4 i_u^2,\\
     \sum_{u=5}^7 \prod_{I \in \mathbb{I}_u} \left\{ -(\#I-1) \right\}^{\chi_{I}\left( \mathbb{I}' \right) - 1}
     &= (i_1+i_2-1)(i_3+i_4-1) + (i_1+i_3-1)(i_2+i_4-1) \\ &\hspace*{13pt}+ (i_1+i_4-1)(i_2+i_3-1)
     = 2\sum_{1 \leq u <v \leq 4}i_ui_v - 3d + 3.
 \end{align*}
 Hence, we have 
 \begin{align*}
   f_{4, 2} &= \sum_{u=1}^7 \prod_{I \in \mathbb{I}_u} \left\{ -(\#I-1) \right\}^{\chi_{I}\left( \mathbb{I}' \right) - 1}\\
	&= 4(d-1)^2 - 2(d-1)d  + \sum_{u=1}^4 i_u^2 + 2\sum_{1 \leq u <v \leq 4}i_ui_v - 3d + 3\\
       &= 2d^2 - 9d + 7 + \left(\sum_{u=1}^4 i_u \right)^2 = 3d^2 - 9d + 7.
 \end{align*}
\end{example}
\begin{example}
 By a similar computation to Example~\ref{ex4.4}, we have the following for $l \leq 5$:
 \begin{align*}
   f_{2,1} &= -d+1, & f_{3,1} &= (d-1)^2, & f_{4,1} &= \{-(d-1)\}^3, & f_{5,1} &= \{-(d-1)\}^4,\\
   f_{2,2} &= 1,      & f_{3,2} &= -2d+3,  & f_{4,2} &= 3d^2-9d+7,   & f_{5,2} &= -4d^3+18d^2-28d+15,\\
   	    &		  &  f_{3,3} &= 1,		& f_{4,3} &= -3d+6,	      & f_{5,3} &= 6d^2-24d+25,\\
	    &		  &		&		& f_{4,4} &= 1,		      & f_{5,4} &= -4d+10,\\
	    &		  &		&		&	    &			      & f_{5,5} &= 1.
 \end{align*}
\end{example}

The following is the key proposition to prove equation~(\ref{eq4.9}).
\begin{proposition}\label{prop4.6}
 The number $f_{l, k}$ defined in Definition~\ref{def4.1} is a function of $l, k$ and $d$, 
 and does not depend on the choice of $\mathbb{I}' \in \mathfrak{I}(\lambda)$
 with $\#\mathbb{I}'=l$.
 Moreover, for $l, k \in \mathbb{Z}$ with $l \geq 2$, we have the equality
 \[
     f_{l+1, k} = f_{l,k-1} - (d-k)f_{l,k}.
 \]
\end{proposition}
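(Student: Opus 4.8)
The plan is to recast $f_{l,k}$ as a combinatorial sum over set partitions and then to settle both assertions—independence of $\mathbb{I}'$ and the recurrence—by a single induction on $l$, in which a weight-zero ``ghost'' block produces the recurrence and a rigidity property of low-degree symmetric polynomials upgrades it to full independence.

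First I would rewrite the sum. Writing $\mathbb{I}'=\{I_1,\dots,I_l\}$ and $i_u:=\#I_u$, every $\mathbb{I}\prec\mathbb{I}'$ is a merging of the $I_u$ and so corresponds bijectively to a set partition $\pi$ of the index set $\{1,\dots,l\}$; a part $B\in\pi$ yields the block $I=\coprod_{u\in B}I_u$ with $\chi_I(\mathbb{I}')=\#B$ and $\#I=\sum_{u\in B}i_u=:w(B)$. With $d=\sum_u i_u$ this turns Definition~\ref{def4.1} into $f_{l,k}=\sum_{\#\pi=k}\prod_{B\in\pi}\{-(w(B)-1)\}^{\#B-1}$, summed over set partitions $\pi$ of $\{1,\dots,l\}$ into $k$ parts. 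I would treat the right-hand side as a symmetric polynomial $\widehat{f}_{l,k}(i_1,\dots,i_l)$ in the weights—so that items of weight $0$ are allowed—and record that its total degree is $\sum_B(\#B-1)=l-k$.

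Second, the computational core is a sharp recurrence requiring no independence. Adjoin a ghost block $l+1$ of weight $0$ and split $\widehat{f}_{l+1,k}(i_1,\dots,i_l,0)$ by the part containing $l+1$: if $\{l+1\}$ is a singleton its factor is $1$ and the remainder is a partition of $\{1,\dots,l\}$ into $k-1$ parts, giving $\widehat{f}_{l,k-1}$; otherwise $l+1$ enters a part $B$ of a partition $\rho$ of $\{1,\dots,l\}$ with $k$ parts, and because the ghost adds no weight the sole effect is to multiply the factor of $B$ by $-(w(B)-1)$. Summing over $B\in\rho$ and using $\sum_{B\in\rho}(w(B)-1)=d-k$ yields $-(d-k)\widehat{f}_{l,k}$. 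Hence
\[
   \widehat{f}_{l+1,k}(i_1,\dots,i_l,0)=\widehat{f}_{l,k-1}(i_1,\dots,i_l)-(d-k)\,\widehat{f}_{l,k}(i_1,\dots,i_l),\qquad d=\textstyle\sum_{u\le l}i_u .
\]

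Third, I would prove by induction on $l$ (the base $l=1$ being trivial) that $\widehat{f}_{l,k}$ depends only on $l,k$ and $d$. The bridge is a rigidity lemma: a symmetric polynomial $g$ in $m$ variables of total degree $\le m-1$ whose restriction to $\{i_m=0\}$ is a polynomial in $i_1+\dots+i_{m-1}$ is itself a polynomial in $i_1+\dots+i_m$. Indeed, in the monomial-symmetric basis the degree bound excludes every term indexed by a partition with $m$ parts, so $g\mapsto g|_{i_m=0}$ is injective on such $g$; as $g$ and $G(i_1+\dots+i_m)$ then agree on $\{i_m=0\}$ (with $G$ the given univariate polynomial), they coincide. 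In the inductive step $\widehat{f}_{l+1,k}|_{i_{l+1}=0}$ is evaluated by the ghost recurrence and, by the inductive hypothesis, is a polynomial in $i_1+\dots+i_l$; since in the relevant range $1\le k\le l+1$ one has $\deg\widehat{f}_{l+1,k}\le(l+1)-k\le l$, the lemma applies with $m=l+1$ and gives the dependence on $d$ alone.

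Finally, with independence in hand the stated recurrence is immediate: a configuration of $l+1$ blocks with total $d$ has the same total as the ghost configuration $(i_1,\dots,i_l,0)$, so $f_{l+1,k}=\widehat{f}_{l+1,k}(i_1,\dots,i_l,0)=f_{l,k-1}-(d-k)f_{l,k}$. I expect the independence step to be the main obstacle: a direct ``swap two indices between parts'' involution fails, because the exchange alters the parts' weights whenever $i_a\ne i_b$. The plan sidesteps this by keeping the two ingredients apart—the exact ghost identity, valid for arbitrary weights, and the degree-driven rigidity lemma—rather than attempting a symmetrization argument.
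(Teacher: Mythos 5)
Your proposal is correct and follows essentially the same route as the paper: the rewriting of $f_{l,k}$ as a sum over set partitions of the index set with weights $i_u$ is the paper's Lemma~\ref{lem4.9} and Definition~\ref{def4.8}, your ``ghost block of weight $0$'' identity is exactly Proposition~\ref{prop4.11} (the substitution $X_{l+1}=0$), and your rigidity lemma is the paper's Lemma~\ref{lem4.12}, differing only in that you phrase the injectivity of restriction to $\{X_{l+1}=0\}$ for symmetric polynomials of degree $\le l$ via the monomial-symmetric basis rather than the elementary-symmetric basis. No substantive differences.
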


\begin{proposition}\label{prop4.7}
 Admitting Proposition~\ref{prop4.6}, 
 we have equation~(\ref{eq4.9}) for every $d \geq 4$, $\lambda \in V_d$ and 
 $\mathbb{I}' \in \mathfrak{I}(\lambda)$.
 Hence, Proposition~\ref{prop4.6} implies Theorem~\ref{mainthm1}.
\end{proposition}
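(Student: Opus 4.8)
The plan is to first recast the left-hand side of~(\ref{eq4.9}) in terms of the quantities $f_{l,k}$ from Definition~\ref{def4.1}, and then to run a short induction on $l := \#\mathbb{I}'$ driven by the recurrence of Proposition~\ref{prop4.6}.

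First I would group the sum in~(\ref{eq4.9}) according to the value $\#\mathbb{I} = k$. Since the coefficient $\prod_{k'=d-\#\mathbb{I}+1}^{d-1}k'$ depends on $\mathbb{I}$ only through $\#\mathbb{I}$, setting $P(d,k) := \prod_{k'=d-k+1}^{d-1}k'$ turns~(\ref{eq4.9}) into the assertion
\[
  S_l := \sum_{k=1}^{l} P(d,k)\, f_{l,k} = 0 \qquad (l = \#\mathbb{I}' \geq 2).
\]
Here I use that $f_{l,k}=0$ unless $1 \leq k \leq l$, so the effective range of the sum is exactly $1$ to $l$, and that by Proposition~\ref{prop4.6} the number $f_{l,k}$ depends only on $l,k,d$, so $S_l$ is well defined independently of the choice of $\mathbb{I}'$.

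The elementary fact I would isolate is the one-term relation $P(d,k+1) = (d-k)\,P(d,k)$, which is immediate since $P(d,k+1)$ carries exactly the one extra factor $d-k$ compared with $P(d,k)$. For the base case $l=2$, I would compute directly from $f_{2,1} = -(d-1)$ and $f_{2,2}=1$ that $S_2 = P(d,1)f_{2,1}+P(d,2)f_{2,2} = -(d-1)+(d-1) = 0$. For the inductive step, I would substitute the recurrence $f_{l+1,k} = f_{l,k-1} - (d-k)f_{l,k}$ into $S_{l+1}$, reindex the first resulting sum by $j=k-1$ (the boundary terms $f_{l,0}$ and $f_{l,l+1}$ vanishing), and collect the coefficient of each $f_{l,k}$; that coefficient is $P(d,k+1) - (d-k)P(d,k)$, which equals $0$ by the one-term relation. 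Hence $S_{l+1}=0$.

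In fact this cancellation is term-by-term and does not invoke the inductive hypothesis $S_l=0$, so once $S_2=0$ is checked by hand, every $S_{l}$ with $l \geq 3$ vanishes directly from Proposition~\ref{prop4.6} (whose recurrence is valid for $l \geq 2$); this establishes~(\ref{eq4.9}) and hence, by the reductions of this section, Theorem~\ref{mainthm1}. I do not expect a serious obstacle here: the whole weight of Theorem~\ref{mainthm1} has been shifted onto Proposition~\ref{prop4.6}, and the only points demanding care are the empty-product convention $P(d,1)=1$, the vanishing of $f_{l,k}$ at the boundary indices $k=0$ and $k=l+1$, and the bookkeeping in the reindexing.
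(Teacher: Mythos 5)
Your proposal is correct and follows essentially the same route as the paper: group the sum in~(\ref{eq4.9}) by $\#\mathbb{I}=k$, handle $\#\mathbb{I}'=2$ by direct computation, and for $\#\mathbb{I}'=l+1\geq 3$ substitute the recurrence of Proposition~\ref{prop4.6}, reindex, and observe that the identity $P(d,k+1)=(d-k)P(d,k)$ makes the sum telescope down to the boundary terms $f_{l,0}$ and $f_{l,l+1}$, which vanish. Your remark that the cancellation is term-by-term and needs no inductive hypothesis beyond the single base case is exactly how the paper's proof is structured as well.
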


\begin{proof}[Proof of Proposition~\ref{prop4.7}]
 If $\#\mathbb{I}'=2$, then we can put $\mathbb{I}'=\{I_1, I_2\}$ and
 have $\left\{ \mathbb{I} \mid \mathbb{I} \prec \mathbb{I}' \right\} = \{\mathbb{I}_0, \mathbb{I}'\}$.
 Hence, we have
 \begin{align*}
     &\sum_{\mathbb{I} \prec \mathbb{I}'}
	\left(
	\prod_{k=d-\# \mathbb{I} +1}^{d-1}k
	\right)
	\cdot
	\left(
		\prod_{I \in \mathbb{I}} \left\{ -(\#I-1) \right\}^{\chi_{I}\left( \mathbb{I}' \right) - 1}
	\right)\\
     &= 1\cdot \{ -(d-1) \}^{2-1} + (d-1) \cdot \{ -(\#I_1-1) \}^{1-1} \cdot \{ -(\#I_2-1) \}^{1-1}\\
     &= -(d-1) + (d-1) = 0.
 \end{align*}

 In the case where $\#\mathbb{I}' \geq 3$, we put $\#\mathbb{I}' =:l+1$.
 Then we have $l \geq 2$ and have the following equalities by Proposition~\ref{prop4.6}:
 \begin{align*}
  &\sum_{\mathbb{I} \prec \mathbb{I}'}
	\left(
		\prod_{k=d-\# \mathbb{I} +1}^{d-1}k
	\right)
	\cdot
	\left(
		\prod_{I \in \mathbb{I}} \left\{ -(\#I-1) \right\}^{\chi_{I}\left( \mathbb{I}' \right) - 1}
	\right)\\
  &= \sum_{k=1}^{l+1}
	\left(
		\prod_{k'=d-k +1}^{d-1}k'
	\right)
	\cdot f_{l+1, k}\\
  &= \sum_{k=1}^{l+1}
	\left(
		\prod_{k'=d-k +1}^{d-1}k'
	\right)
	\cdot 
	\Bigl(
		f_{l, k-1} - (d-k)f_{l,k}
	\Bigr)\\
  &=  \sum_{k=1}^{l+1}
	\left(
		\prod_{k'=d-k +1}^{d-1}k'
	\right)
	\cdot f_{l, k-1}
	-
	\sum_{k=1}^{l+1}
	\left(
		\prod_{k'=d-k +1}^{d-1}k'
	\right)
	\cdot (d-k)f_{l,k}\\
  &=  \sum_{k=0}^{l}
	\left(
		\prod_{k'=d-k}^{d-1}k'
	\right)
	\cdot f_{l, k}
	-
	\sum_{k=1}^{l+1}
	\left(
		\prod_{k'=d-k}^{d-1}k'
	\right)
	\cdot f_{l,k}\\
  &=  \left(
		\prod_{k'=d}^{d-1}k'
	\right)
	\cdot f_{l, 0}
	-
	\left(
		\prod_{k'=d-(l+1)}^{d-1}k'
	\right)
	\cdot f_{l,l+1}
	 = 0,
 \end{align*}
 which completes the proof of Proposition~\ref{prop4.7}.
\end{proof}

\vspace{15pt}

In the rest of this section, we shall prove Proposition~\ref{prop4.6}.
We make use of the following polynomial to prove Proposition~\ref{prop4.6}.
\begin{definition}\label{def4.8}
 For $l, k \in \mathbb{Z}$ with $l \geq 2$, we define $\mathfrak{J}_l(k)$ as follows:
 if $k \leq 0$ or $k \geq l+1$, then we put $\mathfrak{J}_l(k) = \emptyset$; 
 if $1 \leq k \leq l$, then we put
 \[
     \mathfrak{J}_l(k) := \left\{ \left\{J_1,\ldots,J_k\right\}\ \left| \ 
	 \begin{matrix}
	   J_1 \amalg \cdots \amalg J_k = \{1,\ldots,l \},\\
	   J_v \ne \emptyset \textrm{ for every } 1\le v\le k
	 \end{matrix}
	 \right.\right\},
 \]
 where $J_1 \amalg \cdots \amalg J_k$ denotes the disjoint union of $J_1,\dots, J_k$.
 Moreover, for $l, k \in \mathbb{Z}$ with $l \geq 2$, we put
 \[
     g_{l,k}(X_1,\dots,X_l) := \sum_{\mathbb{J} \in \mathfrak{J}_l(k)} \prod_{J\in \mathbb{J}}
	\left\{
		- \left(
			\sum_{u\in J} X_u - 1
		\right)
	\right\}^{\#J-1}.
 \]
\end{definition}
By definition, $\mathfrak{J}_l(k)$ is the set of all the partitions of $\{1, \dots,l\}$ into $k$ pieces.
Note that the equality $g_{l,k}(X_1,\dots,X_l)=0$ trivially holds for $k \leq 0$ or $k \geq l+1$.

\begin{lemma}\label{lem4.9}
 For $\mathbb{I}' \in \mathfrak{I}(\lambda)$ with $\#\mathbb{I}'=l$ and for every $k \in \mathbb{Z}$, 
 putting $\mathbb{I}' =: \left\{ I_1, \dots, I_l \right\}$ and $\# I_u =: i_u$ for $1 \leq u \leq l$,
 we have
 \begin{equation}\label{eq4.10}
    f_{l,k} = g_{l,k}(i_1,\dots,i_l).
 \end{equation}
\end{lemma}
\begin{proof}
 If $k \leq 0$ or $k \geq l+1$, then equation~(\ref{eq4.10}) trivially holds 
 since both sides of equation~(\ref{eq4.10}) are equal to zero.
 In the following, we assume $1 \leq k \leq l$.

 By definition, we have
 \[
    f_{l,k} = \sum_{\mathbb{I} \prec \mathbb{I}',\, \#\mathbb{I}=k} \,
	\prod_{I \in \mathbb{I}} \left\{ -(\#I-1) \right\}^{\chi_{I}\left( \mathbb{I}' \right) - 1}
	= \sum_{\mathbb{I} \prec \mathbb{I}',\, \#\mathbb{I}=k} \,
		\prod_{I \in \mathbb{I}} 
		\left\{ -\left(
			\sum_{1 \leq u \leq l,\ I_u \subset I} i_u-1
		\right) \right\}^{\chi_{I}\left( \mathbb{I}' \right) - 1}.
 \]
 Hence, putting
 \[
     \tilde{g}_{l,k}(X_1,\dots,X_l) 
     := \sum_{\mathbb{I} \prec \mathbb{I}',\, \#\mathbb{I}=k} \,
		\prod_{I \in \mathbb{I}} 
		\left\{ -\left(
			\sum_{1 \leq u \leq l,\ I_u \subset I} X_u-1
		\right) \right\}^{\chi_{I}\left( \mathbb{I}' \right) - 1},
 \]
 we obviously have 
 $\tilde{g}_{l,k}(i_1,\dots,i_l) = f_{l,k}$.

 On the other hand, we can make a bijection 
 $\mathfrak{J}_l(k) \to \{\mathbb{I} \mid \mathbb{I} \prec \mathbb{I}',\, \#\mathbb{I}=k \}$ by
 \[
    \mathbb{J} \mapsto \left\{\left. \amalg_{u \in J} I_u \right| \ J \in \mathbb{J} \right\},
 \]
 which implies that
 \begin{align*}
  \tilde{g}_{l,k}(X_1,\dots,X_l) 
	&= \sum_{\mathbb{J} \in \mathfrak{J}_l(k)} \,
		\prod_{I \in \left\{\left. \amalg_{u \in J} I_u \right| \ J \in \mathbb{J} \right\}} 
		\left\{ -\left(
			\sum_{1 \leq u \leq l,\ I_u \subset I} X_u-1
		\right) \right\}^{\chi_{I}\left( \mathbb{I}' \right) - 1}\\
	&= \sum_{\mathbb{J} \in \mathfrak{J}_l(k)} \,
		\prod_{J \in \mathbb{J}} 
		\left\{ -\left(
			\sum_{1 \leq u \leq l,\ I_u \subset \amalg_{u' \in J} I_{u'}} X_u - 1
		\right) \right\}^{\chi_{\left(\amalg_{u' \in J} I_{u'}\right)}\left( \mathbb{I}' \right) - 1}\\
	&= \sum_{\mathbb{J} \in \mathfrak{J}_l(k)} \,
		\prod_{J \in \mathbb{J}} 
		\left\{ -\left(
			\sum_{u \in J} X_u - 1
		\right) \right\}^{\#J - 1}
	  = g_{l,k}(X_1,\dots,X_l).
 \end{align*}
 Hence, we have equation~(\ref{eq4.10}).
\end{proof}

\begin{lemma}\label{lem4.10}
 The polynomial $g_{l,k}(X_1,\dots,X_l)$ defined in Definition~\ref{def4.8}
 is determined only by $l$ and $k$,
 belongs to the polynomial ring $\mathbb{Z}[X_1,\dots,X_l]$,
 and is symmetric in $l$ variables $X_1,\dots,X_l$.
 Moreover, the equality~$\deg g_{l,k} = l-k$ holds for $l \geq 2$ and $1 \leq k \leq l$.
\end{lemma}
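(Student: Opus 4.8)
The plan is to handle the four assertions separately, treating the first three as immediate consequences of Definition~\ref{def4.8} and reserving the real effort for the degree formula. First, $g_{l,k}$ depends only on $l$ and $k$: its defining sum runs over $\mathfrak{J}_l(k)$, the set of partitions of $\{1,\dots,l\}$ into $k$ blocks, and neither this index set nor the summand involves anything beyond $l$, $k$ and the formal variables $X_1,\dots,X_l$. Next, $g_{l,k} \in \mathbb{Z}[X_1,\dots,X_l]$ because each factor $\{-(\sum_{u \in J}X_u - 1)\}^{\#J-1}$ is an integer power of a linear form with integer coefficients, and products and finite sums of such factors remain in $\mathbb{Z}[X_1,\dots,X_l]$.

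For symmetry I would let $\sigma \in \mathfrak{S}_l$ act on $\{1,\dots,l\}$, inducing a bijection of $\mathfrak{J}_l(k)$ onto itself by $\mathbb{J} \mapsto \sigma(\mathbb{J})$ that preserves block sizes. Since $\sum_{u \in J}X_{\sigma(u)} = \sum_{u' \in \sigma(J)}X_{u'}$ and $\#J = \#\sigma(J)$, the factor attached to a block $J$ after the substitution $X_u \mapsto X_{\sigma(u)}$ equals the factor attached to $\sigma(J)$ in the original sum. Reindexing the sum by this bijection then yields $g_{l,k}(X_{\sigma(1)},\dots,X_{\sigma(l)}) = g_{l,k}(X_1,\dots,X_l)$, which is symmetry.

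The substance is the claim $\deg g_{l,k} = l-k$ for $l \geq 2$, $1 \leq k \leq l$. The first step is the easy upper bound: for each $\mathbb{J} \in \mathfrak{J}_l(k)$ the factor for a block $J$ has degree $\#J - 1$, and since the $k$ blocks partition $\{1,\dots,l\}$ we get $\sum_{J \in \mathbb{J}}(\#J - 1) = l - k$, so every summand has degree at most $l-k$ and hence $\deg g_{l,k} \leq l-k$. The main obstacle, and the only nontrivial point, is to rule out cancellation among the degree-$(l-k)$ homogeneous parts. Collecting top-degree terms, that part equals $(-1)^{l-k}\sum_{\mathbb{J} \in \mathfrak{J}_l(k)}\prod_{J \in \mathbb{J}}(\sum_{u \in J}X_u)^{\#J-1}$, and I would show it is nonzero by evaluating at $X_1 = \cdots = X_l = 1$.

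At the point $(1,\dots,1)$ each factor becomes $(\#J)^{\#J-1} \geq 1$, so the evaluated top-degree part equals $(-1)^{l-k}\sum_{\mathbb{J} \in \mathfrak{J}_l(k)}\prod_{J \in \mathbb{J}}(\#J)^{\#J-1}$, a sum of strictly positive integers over the nonempty set $\mathfrak{J}_l(k)$ (nonempty precisely because $1 \leq k \leq l$), hence a nonzero integer. Therefore the degree-$(l-k)$ homogeneous part does not vanish identically, and $\deg g_{l,k} = l-k$ as claimed. The positivity obtained after specializing all variables to $1$ is exactly what defeats the potential cancellation, so this non-cancellation step is where I would focus the argument.
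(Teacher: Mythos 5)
Your proof is correct and follows essentially the same route as the paper: the first three assertions are read off from the definition, symmetry comes from the induced $\mathfrak{S}_l$-action on $\mathfrak{J}_l(k)$, the upper bound $\deg g_{l,k}\le l-k$ comes from $\sum_{J\in\mathbb{J}}(\#J-1)=l-k$, and non-cancellation of the top-degree part comes from the fact that every leading form carries the uniform sign $(-1)^{l-k}$. Your evaluation of the degree-$(l-k)$ homogeneous part at $X_1=\dots=X_l=1$ is just a clean, concrete way of certifying the same sign argument the paper uses.
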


\begin{proof}
 The former two assertions are obvious by definition.

 The action of $\mathfrak{S}_l$ on $\{1,\dots,l\}$ naturally induces 
 the action of $\mathfrak{S}_l$ on $\mathfrak{J}_l(k)$ for each $k$, 
 which implies that 
 for every $\tau \in \mathfrak{S}_l$, we have $g_{l,k}\left(X_{\tau(1)},\dots,X_{\tau(l)}\right) = g_{l,k}\left(X_1,\dots,X_l\right)$.
 Hence, $g_{l,k}(X_1,\dots,X_l)$ is a symmetric polynomial in $l$ variables $X_1,\dots,X_l$.

 Since $\sum_{J \in \mathbb{J}} \left( \#J-1 \right) = l-\#\mathbb{J} = l-k$ for every $\mathbb{J} \in \mathfrak{J}_l(k)$,
 we have $\deg g_{l,k} \leq l-k$.
 Moreover, for $\mathbb{J} \in \mathfrak{J}_l(k)$ with $1 \leq k \leq l$, the coefficient of each term of 
	$\prod_{J\in \mathbb{J}}
	\left\{
		- \left(
			\sum_{u\in J} X_u - 1
		\right)
	\right\}^{\#J-1}$
 with degree $l-k$ is positive or negative according to whether $l-k$ is even or odd.
 Hence, the terms with degree $l-k$ in $g_{l,k}\left(X_1,\dots,X_l\right)$ are not canceled, 
 which implies that
 the degree of $g_{l,k}(X_1,\dots,X_l)$ is exactly equal to $l-k$ if $1 \leq k \leq l$.
\end{proof}

\begin{proposition}\label{prop4.11}
 For $l, k \in \mathbb{Z}$ with $l \geq 2$, we have
 \[
     g_{l+1, k}(X_1, \dots, X_l, 0) = g_{l, k-1}(X_1,\dots, X_l) - \left(X_1+\dots+X_l - k\right) g_{l,k}(X_1,\dots, X_l).
 \]
\end{proposition}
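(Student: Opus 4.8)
The plan is to prove the identity by a combinatorial decomposition of the partitions occurring in the definition of $g_{l+1,k}$, organized according to the block that contains the distinguished element $l+1$. Starting from Definition~\ref{def4.8}, after setting $X_{l+1}=0$ we have
\[
 g_{l+1,k}(X_1,\dots,X_l,0) = \sum_{\mathbb{J} \in \mathfrak{J}_{l+1}(k)} \left. \prod_{J \in \mathbb{J}} \left\{ -\left( \textstyle\sum_{u \in J} X_u - 1\right)\right\}^{\#J - 1} \right|_{X_{l+1}=0}.
\]
Every $\mathbb{J} \in \mathfrak{J}_{l+1}(k)$ falls into exactly one of two types: either $\{l+1\}$ is a singleton block of $\mathbb{J}$, or $l+1$ lies in a block $J$ with $\#J \geq 2$. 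I would treat these two families separately and show that they produce the two terms $g_{l,k-1}(X_1,\dots,X_l)$ and $-(X_1+\dots+X_l-k)\,g_{l,k}(X_1,\dots,X_l)$ respectively.

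For the singleton family, deleting the block $\{l+1\}$ sets up a bijection between $\{\mathbb{J}\in\mathfrak{J}_{l+1}(k)\mid \{l+1\}\in\mathbb{J}\}$ and $\mathfrak{J}_l(k-1)$. The factor contributed by the block $\{l+1\}$ is $\{-(X_{l+1}-1)\}^{0}=1$, and the factors of the remaining $k-1$ blocks do not involve $X_{l+1}$; hence this family contributes exactly $g_{l,k-1}(X_1,\dots,X_l)$. For the non-singleton family, forgetting $l+1$—that is, deleting it from its block to obtain a subset of $\{1,\dots,l\}$—defines a map onto $\mathfrak{J}_l(k)$ whose fiber over a given $\mathbb{J}^\flat \in \mathfrak{J}_l(k)$ consists of the $k$ partitions obtained by inserting $l+1$ into one of the $k$ blocks of $\mathbb{J}^\flat$. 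When $l+1$ is inserted into a block $J'$ of $\mathbb{J}^\flat$, setting $X_{l+1}=0$ turns the corresponding factor $\{-(\sum_{u\in J'}X_u+X_{l+1}-1)\}^{\#J'}$ into $\{-(\sum_{u\in J'}X_u-1)\}^{\#J'}$, which carries exactly one extra factor of $-(\sum_{u\in J'}X_u-1)$ relative to the factor $\{-(\sum_{u\in J'}X_u-1)\}^{\#J'-1}$ appearing in $g_{l,k}$.

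Factoring out the common product $\prod_{\tilde J\in\mathbb{J}^\flat}\{-(\sum_{u\in\tilde J}X_u-1)\}^{\#\tilde J-1}$ over the $k$ choices of $J'$, the non-singleton family contributes $\bigl(\sum_{J'\in\mathbb{J}^\flat}(1-\sum_{u\in J'}X_u)\bigr)$ times that product, summed over $\mathbb{J}^\flat\in\mathfrak{J}_l(k)$. Since the blocks $J'$ partition $\{1,\dots,l\}$, the linear factor evaluates to $\sum_{J'\in\mathbb{J}^\flat}(1-\sum_{u\in J'}X_u) = k-(X_1+\dots+X_l) = -(X_1+\dots+X_l-k)$, which is independent of $\mathbb{J}^\flat$ and may be pulled outside the sum, yielding $-(X_1+\dots+X_l-k)\,g_{l,k}(X_1,\dots,X_l)$. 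Adding the two contributions gives the asserted identity.

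The main obstacle I expect is the careful bookkeeping of exponents in the non-singleton case: one must verify simultaneously that enlarging a block $J'$ by adjoining $l+1$ raises its exponent from $\#J'-1$ to $\#J'$, and that the specialization $X_{l+1}=0$ merely removes $l+1$ from the inner sum, so that precisely one factor $-(\sum_{u\in J'}X_u-1)$ survives per insertion. Once the fiber structure of the forgetting map (size exactly $k$) and this exponent shift are handled correctly, the remaining steps are elementary algebra.
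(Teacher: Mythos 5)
Your proposal is correct and follows essentially the same route as the paper's proof: the same split of $\mathfrak{J}_{l+1}(k)$ according to whether $\{l+1\}$ is a singleton block, the same bijection onto $\mathfrak{J}_l(k-1)$ for the singleton family, and the same $k$-to-one forgetting map onto $\mathfrak{J}_l(k)$ with the exponent shift and the evaluation $\sum_{J'}(1-\sum_{u\in J'}X_u)=-(X_1+\dots+X_l-k)$ for the other family. No gaps; this matches the paper's argument step for step.
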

\begin{proof}
 First we put 
 \[
	\mathfrak{J}_{l+1}^1(k) := \left\{ \mathbb{J} \in \mathfrak{J}_{l+1}(k) \mid \{l+1\} \in \mathbb{J} \right\} 
	\quad \text{and} \quad
	\mathfrak{J}_{l+1}^2(k) := \left\{ \mathbb{J} \in \mathfrak{J}_{l+1}(k) \mid \{l+1\} \notin \mathbb{J} \right\}
 \]
 for $l \geq 2$.  Then we have $\mathfrak{J}_{l+1}^1(k) \, \amalg \, \mathfrak{J}_{l+1}^2(k) = \mathfrak{J}_{l+1}(k)$ for every $k$.
 Moreover, we have $\mathfrak{J}_{l+1}^1(k)= \emptyset$ for $k \leq 1$ or $k \geq l+2$, and 
 $\mathfrak{J}_{l+1}^2(k)= \emptyset$ for $k \leq 0$ or $k \geq l+1$.

 For $\mathbb{J} \in \mathfrak{J}_{l+1}^1(k)$, we can express $\mathbb{J} = \{J_1,\dots,J_{k-1}, \{l+1\}\}$, where
 $J_1 \amalg \dots \amalg J_{k-1} = \{1,\dots,l\}$.
 Hence, we can make a bijection $\pi_1: \mathfrak{J}_{l+1}^1(k) \to \mathfrak{J}_{l}(k-1)$ by
 $\mathbb{J} \mapsto \mathbb{J} \setminus \{\{l+1\}\}$.
 Moreover, for $J=\{l+1\} \in \mathbb{J} \in \mathfrak{J}_{l+1}^1(k)$, we have
 \[
	\left\{
		- \left(
			\sum_{u\in J} X_u - 1
		\right)
	\right\}^{\#J-1}
	=
	\left\{
		- \left(
			X_{l+1} - 1
		\right)
	\right\}^{1-1}
	= 1.
 \]
 Hence, we have
 \begin{equation}\label{eq4.11}\begin{split}
	\sum_{\mathbb{J} \in \mathfrak{J}_{l+1}^1(k)} \prod_{J\in \mathbb{J}}
	\left\{
		- \left(
			\sum_{u\in J} X_u - 1
		\right)
	\right\}^{\#J-1}
	&=
	\sum_{\mathbb{J} \in \mathfrak{J}_{l+1}^1(k)} \prod_{J\in \pi_1\left(\mathbb{J}\right)}
	\left\{
		- \left(
			\sum_{u\in J} X_u - 1
		\right)
	\right\}^{\#J-1}\\
	&=
	\sum_{\mathbb{J} \in \mathfrak{J}_{l}(k-1)} \prod_{J\in \mathbb{J}}
	\left\{
		- \left(
			\sum_{u\in J} X_u - 1
		\right)
	\right\}^{\#J-1}\\
	&=
	g_{l,k-1}(X_1,\dots,X_l).
 \end{split}\end{equation}

 For $\mathbb{J}' \in \mathfrak{J}_{l+1}^2(k)$, we can express $\mathbb{J}' = \{J_1,\dots,J_k\}$ with $\{l+1\} \subsetneq J_k$,
 and in this expression, we have $\{J_1,\dots,J_{k-1}, \left(J_k\setminus\{l+1\}\right) \} \in \mathfrak{J}_{l}(k)$.
 Hence, we can make a surjection $\pi_2: \mathfrak{J}_{l+1}^2(k) \to \mathfrak{J}_{l}(k)$ by 
 $\mathbb{J}' \mapsto \left\{ J \setminus \{l+1\} \mid J \in \mathbb{J}' \right\}$.
 For each $\mathbb{J} = \{J_1,\dots,J_k\} \in \mathfrak{J}_{l}(k)$, 
 its fiber $\pi_2^{-1}\left( \mathbb{J} \right)$ consists of $k$ elements,
 which are $\{J_v \mid 1 \leq v \leq k,\ v \ne v' \} \cup \{ J_{v'}\amalg\{l+1\} \}$ for $1 \leq v' \leq k$.
 Hence, for each $\mathbb{J} = \{J_1,\dots,J_k\} \in \mathfrak{J}_{l}(k)$, we have
 \begin{align*}
	&\left.
	\sum_{\mathbb{J}' \in \pi_2^{-1}\left( \mathbb{J} \right)} \prod_{J\in \mathbb{J}'}
	\left\{
		- \left(
			\sum_{u\in J} X_u - 1
		\right)
	\right\}^{\#J-1}
	\right|_{X_{l+1}=0}\\
	&= 
	\sum_{v'=1}^k 
	\left[
		\left\{
			-\left(
				\sum_{u \in J_{v'}\amalg\{l+1\}} X_u -1
			\right)
		\right\}^{\#\left( J_{v'}\amalg \{l+1\} \right) - 1} \right.\\
		&\hspace{161pt}\left.\left.\times 
		\prod_{1 \leq v \leq k,\ v \ne v'}
		\left\{
			-\left(
				\sum_{u \in J_v} X_u - 1
			\right)
		\right\}^{\#J_v - 1}
	\right]
	\right|_{X_{l+1}=0}\\
	&= \sum_{v'=1}^k \left[
		\left\{
			-\left(
				\sum_{u \in J_{v'}} X_u -1
			\right)
		\right\}^{\# J_{v'}}
		\cdot 
		\prod_{1 \leq v \leq k,\ v \ne v'}
		\left\{
			-\left(
				\sum_{u \in J_v} X_u - 1
			\right)
		\right\}^{\#J_v - 1}
	\right]\\
	&= \sum_{v'=1}^k \left[
		\left\{
			-\left(
				\sum_{u \in J_{v'}} X_u -1
			\right)
		\right\}
		\cdot 
		\prod_{v=1}^k
		\left\{
			-\left(
				\sum_{u \in J_v} X_u - 1
			\right)
		\right\}^{\#J_v - 1}
	\right]\\
	&= \left[
		\sum_{v'=1}^k 
		\left\{
			-\left(
				\sum_{u \in J_{v'}} X_u -1
			\right)
		\right\}
	\right]
		\cdot 
		\prod_{v=1}^k
		\left\{
			-\left(
				\sum_{u \in J_v} X_u - 1
			\right)
		\right\}^{\#J_v - 1}\\
	&= - \left(
			\sum_{u=1}^l X_u - k
		\right)
		\cdot 
		\prod_{J \in \mathbb{J}}
		\left\{
			-\left(
				\sum_{u \in J} X_u - 1
			\right)
		\right\}^{\#J - 1}.
 \end{align*}
 We therefore have
 \begin{equation}\label{eq4.12}\begin{split}
	&\left.
	\sum_{\mathbb{J}' \in \mathfrak{J}_{l+1}^2(k)} \prod_{J\in \mathbb{J}'}
	\left\{
		- \left(
			\sum_{u\in J} X_u - 1
		\right)
	\right\}^{\#J-1}
	\right|_{X_{l+1}=0}\\
	&= \left.
	\sum_{\mathbb{J} \in \mathfrak{J}_l(k)} 
	\sum_{\mathbb{J}' \in \pi_2^{-1}\left( \mathbb{J} \right)} \prod_{J\in \mathbb{J}'}
	\left\{
		- \left(
			\sum_{u\in J} X_u - 1
		\right)
	\right\}^{\#J-1}
	\right|_{X_{l+1}=0}\\
	&= \sum_{\mathbb{J} \in \mathfrak{J}_l(k)}
	\left[
		- \left(
			\sum_{u=1}^l X_u - k
		\right)
		\cdot 
		\prod_{J \in \mathbb{J}}
		\left\{
			-\left(
				\sum_{u \in J} X_u - 1
			\right)
		\right\}^{\#J - 1}
	\right]\\
	&= - \left(
			\sum_{u=1}^l X_u - k
		\right)
		\sum_{\mathbb{J} \in \mathfrak{J}_l(k)}
		\prod_{J \in \mathbb{J}}
		\left\{
			-\left(
				\sum_{u \in J} X_u - 1
			\right)
		\right\}^{\#J - 1}\\
	&= - \left(X_1+\dots+X_l-k\right)g_{l,k}(X_1,\dots,X_l).
 \end{split}\end{equation}

 By equations~(\ref{eq4.11}) and~(\ref{eq4.12}), we have
 \begin{align*}
	g_{l+1,k}(X_1,\dots,X_l,0) 
	&= \left.
	\sum_{\mathbb{J} \in \mathfrak{J}_{l+1}(k)} \prod_{J\in \mathbb{J}}
	\left\{
		- \left(
			\sum_{u\in J} X_u - 1
		\right)
	\right\}^{\#J-1}
	\right|_{X_{l+1}=0}\\
	&= \sum_{\mathbb{J} \in \mathfrak{J}_{l+1}^1(k)} \prod_{J\in \mathbb{J}}
	\left\{
		- \left(
			\sum_{u\in J} X_u - 1
		\right)
	\right\}^{\#J-1}\\
	&\hspace*{13pt}+
	\left.
	\sum_{\mathbb{J}' \in \mathfrak{J}_{l+1}^2(k)} \prod_{J\in \mathbb{J}'}
	\left\{
		- \left(
			\sum_{u\in J} X_u - 1
		\right)
	\right\}^{\#J-1}
	\right|_{X_{l+1}=0}\\
	&= g_{l,k-1}(X_1,\dots,X_l)  - \left(X_1+\dots+X_l-k\right)g_{l,k}(X_1,\dots,X_l),
 \end{align*}
 which completes the proof of Proposition~\ref{prop4.11}.
\end{proof}

\begin{lemma}\label{lem4.12}
 For every $l, k \in \mathbb{Z}$ with $l \geq 2$, there exists a polynomial $h_{l,k}(Y) \in \mathbb{Z}[Y]$ 
 such that the equality 
 \begin{equation}\label{eq4.13}
	g_{l,k}(X_1,\dots,X_l)= h_{l,k}(X_1 + \dots + X_l)
 \end{equation}
 holds.
 Moreover, for every $l, k \in \mathbb{Z}$ with $l \geq 2$, the equality 
 \begin{equation}\label{eq4.14}
    h_{l+1,k}(Y) = h_{l,k-1}(Y) - (Y-k)h_{l,k}(Y)
 \end{equation}
 holds.
\end{lemma}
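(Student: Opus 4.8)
The plan is to prove both claims of Lemma~\ref{lem4.12} \emph{simultaneously} by induction on $l$, packaging the existence statement~(\ref{eq4.13}) together with the recurrence~(\ref{eq4.14}): at the inductive step I would \emph{define} the candidate $h_{l+1,k}$ by the right-hand side of~(\ref{eq4.14}) and then verify that this polynomial really represents $g_{l+1,k}$ as in~(\ref{eq4.13}). For the base case $l=2$ I would read the two nonzero values off Definition~\ref{def4.8} directly, namely $g_{2,1}(X_1,X_2)=-(X_1+X_2-1)$ and $g_{2,2}(X_1,X_2)=1$, so that $h_{2,1}(Y)=-(Y-1)$, $h_{2,2}(Y)=1$, and $h_{2,k}=0$ otherwise give~(\ref{eq4.13}) for $l=2$ with all $h_{2,k}\in\mathbb{Z}[Y]$.

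For the inductive step, assume~(\ref{eq4.13}) holds for some $l\ge 2$ and all $k$, and set $h_{l+1,k}(Y):=h_{l,k-1}(Y)-(Y-k)h_{l,k}(Y)\in\mathbb{Z}[Y]$, which is exactly~(\ref{eq4.14}). Feeding the inductive hypothesis $g_{l,k-1}=h_{l,k-1}(X_1+\dots+X_l)$ and $g_{l,k}=h_{l,k}(X_1+\dots+X_l)$ into Proposition~\ref{prop4.11} yields at once
\[
 g_{l+1,k}(X_1,\dots,X_l,0)=h_{l+1,k}(X_1+\dots+X_l),
\]
so~(\ref{eq4.13}) for $l+1$ already holds after the specialization $X_{l+1}=0$. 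The real content is to upgrade this specialized identity to the full identity in $l+1$ variables.

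To do this I would study the defect $P:=g_{l+1,k}(X_1,\dots,X_{l+1})-h_{l+1,k}(X_1+\dots+X_{l+1})$. By Lemma~\ref{lem4.10} the polynomial $g_{l+1,k}$ is symmetric, and $h_{l+1,k}(X_1+\dots+X_{l+1})$ is visibly symmetric, so $P$ is a symmetric polynomial; the displayed identity says $P|_{X_{l+1}=0}=0$. By symmetry $P$ vanishes whenever \emph{any} single variable is set to $0$, hence $X_i\mid P$ for every $i$, and since the $X_i$ are pairwise coprime we conclude $X_1\cdots X_{l+1}\mid P$. On the other hand Lemma~\ref{lem4.10} gives $\deg g_{l+1,k}=(l+1)-k$ for $1\le k\le l+1$, and the specialization identity (restricting further to $X_2=\dots=X_l=0$) forces $\deg_Y h_{l+1,k}\le (l+1)-k$, whence $\deg P\le (l+1)-k\le l<l+1$; the cases $k\le 0$ or $k\ge l+2$ are trivial since every term vanishes. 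A nonzero multiple of $X_1\cdots X_{l+1}$ has total degree at least $l+1$, so this degree bound forces $P=0$, which is precisely~(\ref{eq4.13}) for $l+1$ and closes the induction.

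The main obstacle is exactly this last divisibility-plus-degree step. Proposition~\ref{prop4.11} only controls $g_{l+1,k}$ on the hyperplane $X_{l+1}=0$, and a priori a symmetric polynomial in $l+1$ variables is very far from being a function of the single quantity $X_1+\dots+X_{l+1}$; what rescues the argument is that the defect $P$ is at the same time divisible by the full product $X_1\cdots X_{l+1}$ (from symmetry and vanishing on every coordinate hyperplane) and of strictly smaller degree than that product (from the degree count in Lemma~\ref{lem4.10}), an incompatibility that can only be resolved by $P\equiv 0$.
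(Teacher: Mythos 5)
Your proof is correct and follows the same overall strategy as the paper's: the same base case $l=2$ with $h_{2,1}(Y)=-(Y-1)$ and $h_{2,2}(Y)=1$, the same inductive definition of $h_{l+1,k}$ by the right-hand side of~(\ref{eq4.14}), the same appeal to Proposition~\ref{prop4.11} to obtain the specialized identity $g_{l+1,k}(X_1,\dots,X_l,0)=h_{l+1,k}(X_1+\dots+X_l)$, and the same reduction to showing that the symmetric defect $P=g_{l+1,k}(X_1,\dots,X_{l+1})-h_{l+1,k}(X_1+\dots+X_{l+1})$ vanishes. The only genuine divergence is the mechanism for concluding $P=0$. The paper notes that $\deg P\le l$ forces $P$ to lie in the subring generated by the elementary symmetric polynomials $\sigma_{l+1,1},\dots,\sigma_{l+1,l}$ (no occurrence of $\sigma_{l+1,l+1}=X_1\cdots X_{l+1}$), and that the substitution $X_{l+1}=0$ restricts to an injective ring homomorphism onto $\mathbb{Z}\left[\sigma_{l,1},\dots,\sigma_{l,l}\right]$, so $P|_{X_{l+1}=0}=0$ gives $P=0$. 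You instead use symmetry to get $P|_{X_i=0}=0$ for every $i$, hence $X_1\cdots X_{l+1}\mid P$ in the UFD $\mathbb{Z}[X_1,\dots,X_{l+1}]$, and then the same degree bound $\deg P\le l+1-k\le l<l+1$ kills $P$. Both arguments hinge on exactly the same degree count, which you correctly justify via Lemma~\ref{lem4.10} together with the further specialization $X_2=\dots=X_l=0$ to bound $\deg_Y h_{l+1,k}$; your route avoids invoking the fundamental theorem of symmetric polynomials and is, if anything, slightly more elementary. I see no gaps.
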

\begin{proof}
 In the case where $l=2$, we have $g_{2,1}(X_1, X_2) = -(X_1 + X_2 - 1)$ and $g_{2,2}(X_1, X_2) = 1$
 by a direct calculation.
 Hence, putting $h_{2,1}(Y) = -(Y-1)$, $h_{2,2}(Y) = 1$, and $h_{2,k}(Y) = 0$ for $k \ne 1, 2$, 
 we have $g_{2,k}(X_1, X_2) = h_{2, k}(X_1+X_2)$ for every $k \in \mathbb{Z}$.

 For $l \geq 3$ and for every $k\in\mathbb{Z}$, we define the polynomials $h_{l,k}(Y)$ inductively by equation~(\ref{eq4.14}).
 Then we obviously have $h_{l,k}(Y)=0$ for $k \leq 0$ or $k \geq l+1$.
 Hence, equation~(\ref{eq4.13}) holds for $k \leq 0$ or $k \geq l+1$.
 In the following, we show equation~(\ref{eq4.13}) for $l\geq 3$ and $1 \leq k \leq l$ by induction on $l$.
 Hence, we suppose equation~(\ref{eq4.13}) for every $k \in \mathbb{Z}$,
 and show the equality $g_{l+1,k}(X_1,\dots,X_{l+1})= h_{l+1,k}(X_1 + \dots + X_{l+1})$ for $1 \leq k \leq l+1$.

 By the assumption and Proposition~\ref{prop4.11}, we have
 \begin{align*}
	g_{l+1, k}(X_1, \dots, X_l, 0) 
	&= g_{l, k-1}(X_1,\dots, X_l) - \left(X_1+\dots+X_l - k\right) g_{l,k}(X_1,\dots, X_l)\\
	&= h_{l, k-1}(X_1+\dots+X_l) - \left(X_1+\dots+X_l - k\right) h_{l,k}(X_1+\dots+X_l)\\
	&= h_{l+1,k}(X_1+\dots+X_l).
 \end{align*}
 Hence, putting $P_{l+1,k}(X_1,\dots,X_{l+1}) := g_{l+1, k}(X_1, \dots, X_{l+1}) - h_{l+1,k}(X_1+\dots+X_{l+1})$,
 we have $P_{l+1,k}(X_1,\dots,X_l,0)=0$.
 Moreover, by Lemma~\ref{lem4.10}, the polynomial $P_{l+1,k}(X_1,\dots,X_{l+1})$ is symmetric
 in $l+1$ variables $X_1,\dots,X_{l+1}$.

 We denote by $\sigma_{l+1, m}=\sigma_{l+1,m}(X_1,\dots,X_{l+1})$ the elementary symmetric polynomial of degree $m$
 in $l+1$ variables $X_1,\dots,X_{l+1}$.
 Since $P_{l+1,k}(X_1,\dots,X_{l+1})$ is a symmetric polynomial with coefficients in $\mathbb{Z}$,
 we have $P_{l+1,k}(X_1,\dots,X_{l+1}) \in \mathbb{Z}\left[ \sigma_{l+1,1},\dots,\sigma_{l+1,l+1} \right]$.
 Moreover, since $\deg g_{l+1,k} = \deg h_{l+1,k} = l+1-k \leq l$, we have $\deg P_{l+1,k} \leq l$,
 which implies that $P_{l+1,k}(X_1,\dots,X_{l+1}) \in \mathbb{Z}\left[ \sigma_{l+1,1},\dots,\sigma_{l+1,l} \right]$.

 Since $\sigma_{l+1,m}(X_1,\dots,X_l,0) = \sigma_{l,m}(X_1,\dots,X_l)$ for $1 \leq m \leq l$,
 we have a ring isomorphism 
 $\varphi : \mathbb{Z}\left[ \sigma_{l+1,1},\dots,\sigma_{l+1,l} \right] \to \mathbb{Z}\left[ \sigma_{l,1},\dots,\sigma_{l,l} \right]$
 by substituting $X_{l+1}=0$,
 and under the map $\varphi$,
 we have $\varphi\left( P_{l+1,k} \right) =
 P_{l+1,k}(X_1,\dots,X_l,0)=0$.
 Hence, injectivity of $\varphi$ implies $P_{l+1,k}(X_1,\dots,X_{l+1})=0$.
 We therefore have $g_{l+1,k}(X_1,\dots,X_{l+1})= h_{l+1,k}(X_1 + \dots + X_{l+1})$,
 which completes the proof of Lemma~\ref{lem4.12} by induction on $l$.
\end{proof}

\begin{proof}[Proof of Proposition~\ref{prop4.6}]
 By Definition~\ref{def4.1}, $f_{l,k}$ is originally a function of $d \geq4$, $\mathbb{I}' \in \mathfrak{I}(\lambda)$ and $k \in \mathbb{Z}$.
 However, putting $\#\mathbb{I}'=l$, $\mathbb{I}' =: \left\{ I_1, \dots, I_l \right\}$ and $\# I_u =: i_u$ for $1 \leq u \leq l$,
 we have, by Lemmas~\ref{lem4.9} and~\ref{lem4.12}, the equality
 \begin{equation}\label{eq4.15}
    f_{l,k} = g_{l,k}(i_1,\dots,i_l)=h_{l,k}(i_1+\dots+i_l) = h_{l,k}(d).
 \end{equation}
 Hence, $f_{l,k}$ is in practice a function of $l, k$ and $d$
 since the polynomial $h_{l,k}(Y)$ depends only on $l$ and $k$.

 Moreover, by equation~(\ref{eq4.15}) and Lemma~\ref{lem4.12}, we have
 \[
     f_{l+1,k} = h_{l+1,k}(d) = h_{l,k-1}(d) - (d-k)h_{l,k}(d) = f_{l,k-1} - (d-k)f_{l,k}
 \]
 for every $l,k\in\mathbb{Z}$ with $l \geq 2$,
 which completes the proof of Proposition~\ref{prop4.6}.
\end{proof}

To summarize the above mentioned, we have completed the proof of Theorem~\ref{mainthm1}.

\section{Proof  of Theorem~\ref{mainthm2}}\label{proof2}

In this section, we prove Theorem~\ref{mainthm2}.
Throughout this section, we always assume $\lambda=(\lambda_1,\dots,\lambda_d) \in V_d$,
and moreover assume that $s_d(\lambda)$ is the non-negative integer defined in Theorem~\ref{thm2.3}.

First, we consider the case where $d=2$.
If $d=2$, then the maps $p: \mathrm{MC}_2 \to \mathrm{MP}_2$ and $\Phi_2: \mathrm{MP}_2 \to \widetilde{\Lambda}_2$ are bijective.
Hence, we have $\#\left( \widehat{\Phi}_2^{-1} \left( \bar{\lambda} \right) \right) = 1$
for every $\lambda \in V_2$.
On the other hand, since $s_2(\lambda)= 1$ and $\mathfrak{K}(\lambda) = \left\{ \{1\}, \{2\} \right\}$ for every $\lambda \in V_2$, we always have
\[
    \frac{(d-1)s_d(\lambda)}{ \prod_{K\in\mathfrak{K}(\lambda)}\left(\#K\right)!}
    = \frac{(2-1)s_2(\lambda)}{ 1!\cdot 1!} = 1.
\]
Hence, equation~(\ref{eq3.2}) holds for every $\lambda \in V_2$.

In the rest of this section, we consider the case where $d \geq 3$.
We denote by $\mathbb{P}^{d-1}$ the complex projective space of dimension~$d-1$, and put
\[
 \Sigma_d(\lambda) := \left\{(\zeta_1:\cdots:\zeta_d) \in \mathbb{P}^{d-1} \
        \left|\ \begin{matrix}
		 \sum_{i=1}^d \zeta_i = 0 \\
		 \sum_{i=1}^d \frac{1}{1-\lambda_i}\zeta_i^k = 0 \quad 
		  \textrm{for} \quad 1 \le k \le d-2 \\
		 \zeta_1,\ldots,\zeta_d \textrm{ are mutually distinct}
		\end{matrix}
        \right. \right\}.
\]
We already have the following proposition by Propositions~4.3 and~9.1 in~\cite{sugi1}.

\begin{proposition}\label{prop5.1}
 The equality $\#\left( \Sigma_d(\lambda) \right) = s_d(\lambda)$ holds.
 Moreover, we can define the surjection $\pi(\lambda) : \Sigma_d(\lambda) \to \Phi_d^{-1}(\bar{\lambda})$ by 
 \[
    (\zeta_1:\cdots:\zeta_d) \mapsto f(z)=z + \rho (z - \zeta_1)\cdots(z-\zeta_d),
 \]
 where $-\frac{1}{\rho}= \sum_{i=1}^d \frac{1}{1-\lambda_i}\zeta_i^{d-1}$.
\end{proposition}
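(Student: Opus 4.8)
The plan is to prove the two assertions separately. The construction, well-definedness and surjectivity of $\pi(\lambda)$ are elementary and self-contained, resting on the classical Lagrange identity; the numerical equality $\#\left(\Sigma_d(\lambda)\right) = s_d(\lambda)$ I would extract from a B\'ezout count combined with an inclusion--exclusion over the collision strata organized by $\mathfrak{I}(\lambda)$. The backbone of the first part is the identity that for mutually distinct $\zeta_1,\dots,\zeta_d$, writing $m_i := \prod_{j \ne i}(\zeta_i - \zeta_j)$, one has $\sum_{i=1}^d \zeta_i^k/m_i = 0$ for $0 \le k \le d-2$ and $= 1$ for $k = d-1$. For any $f(z) = z + \rho\prod_i(z-\zeta_i)$ with distinct $\zeta_i$ we have $f'(\zeta_i) = 1 + \rho\, m_i$, so prescribing $f'(\zeta_i) = \lambda_i$ is equivalent to $1/m_i = -\rho/(1-\lambda_i)$. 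Substituting this into the Lagrange identity turns the relations for $1 \le k \le d-2$ into the defining equations $\sum_i \zeta_i^k/(1-\lambda_i) = 0$ of $\Sigma_d(\lambda)$ (the case $k=0$ being automatic since $\lambda \in V_d$), while the case $k = d-1$ reproduces exactly the normalization $-1/\rho = \sum_i \zeta_i^{d-1}/(1-\lambda_i)$ in the statement.

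Building on this dictionary, I would first check $\rho \ne 0$: if $\sum_i \zeta_i^{d-1}/(1-\lambda_i)$ vanished, then $w_i := 1/(1-\lambda_i)$ would solve the full Vandermonde system $\sum_i w_i \zeta_i^k = 0$ for $0 \le k \le d-1$, forcing every $w_i = 0$, which is impossible. To see that $f = \pi(\lambda)(\zeta)$ genuinely realizes the prescribed multipliers, I would observe that both $(w_i)_i$ and $(-1/(\rho\, m_i))_i$ solve the same nondegenerate Vandermonde system with right-hand side $(0,\dots,0,-1/\rho)$, hence coincide; this yields $\rho\, m_i = \lambda_i - 1$ and therefore $f'(\zeta_i) = \lambda_i$, so $[f] \in \Phi_d^{-1}(\bar\lambda)$. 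Well-definedness under the projective scaling $\zeta_i \mapsto t\zeta_i$ follows because it corresponds to the affine conjugacy $z \mapsto tz$, which rescales $\rho$ and preserves multipliers; and surjectivity is immediate, since any $[f] \in \Phi_d^{-1}(\bar\lambda)$ has multiplier multiset $\{\lambda_1,\dots,\lambda_d\}$, so after centering a representative and labeling its fixed points to match, the resulting point lies in $\Sigma_d(\lambda)$ and maps to $[f]$.

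For the equality $\#\left(\Sigma_d(\lambda)\right) = s_d(\lambda)$, I would realize $\Sigma_d(\lambda)$ as the distinct-coordinate locus of the projective complete intersection $\overline{\Sigma}_d(\lambda) \subset \mathbb{P}^{d-1}$ cut out by $\sum_i \zeta_i = 0$ and $\sum_i \zeta_i^k/(1-\lambda_i) = 0$ for $1 \le k \le d-2$. These $d-1$ equations have degrees $1,1,2,\dots,d-2$, so B\'ezout predicts total degree $(d-2)!$ counted with multiplicity, matching the leading term of~(\ref{eq2.3}). The next step is to stratify $\overline{\Sigma}_d(\lambda) \setminus \Sigma_d(\lambda)$ by the set partition $\mathbb{I}$ recording which coordinates coincide: setting $\zeta_i = \xi_u$ for $i$ in the block $I_u$ collapses each equation to $\sum_u W_u \xi_u^k = 0$ with $W_u := \sum_{i \in I_u} 1/(1-\lambda_i)$, and a Vandermonde argument in the distinct values $\xi_u$ forces every $W_u = 0$; hence a nonempty stratum occurs precisely when $\mathbb{I} \in \mathfrak{I}(\lambda)$. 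On such a stratum the system decouples block by block, each block $I$ being a lower-degree instance counted by $s_{\#I}(\lambda_I)$ (available by induction on $d$, as $2 \le \#I < d$); weighting by the factors $(\#I-1)$ and the multinomial term $\prod_{k=d-\#\mathbb{I}+1}^{d-2}k$ should reproduce exactly the summand $e_{\mathbb{I}}(\lambda)\prod_{k=d-\#\mathbb{I}+1}^{d-2}k$ of~(\ref{eq2.3}), and subtracting all stratum contributions from $(d-2)!$ then gives $\#\left(\Sigma_d(\lambda)\right) = s_d(\lambda)$ via~(\ref{eq2.3}) and~(\ref{eq2.2}).

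I expect the genuine obstacle to be the local multiplicity count at each collision stratum: one must verify that the stratum indexed by $\mathbb{I} \in \mathfrak{I}(\lambda)$ contributes to the B\'ezout total with multiplicity exactly $e_{\mathbb{I}}(\lambda)\prod_{k=d-\#\mathbb{I}+1}^{d-2}k$, which is an excess-intersection computation requiring control of the scheme structure of $\overline{\Sigma}_d(\lambda)$ transverse to the stratum. The combinatorial weights $(\#I-1)$ and the multinomial factor should emerge precisely from this transverse analysis together with the interpolation theory of the block variables $\xi_u$. This is the technical heart of the argument, corresponding to Propositions~4.3 and~9.1 of~\cite{sugi1}; along the way I would also need to confirm that $\overline{\Sigma}_d(\lambda)$ is genuinely zero-dimensional, so that B\'ezout applies as a finite count rather than merely as an upper bound.
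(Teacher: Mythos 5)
The paper itself does not prove this proposition: it is imported verbatim from Propositions~4.3 and~9.1 of~\cite{sugi1}, so there is no internal argument to compare yours against. Your treatment of the second assertion (construction, well-definedness and surjectivity of $\pi(\lambda)$) is correct and essentially the standard one: the Lagrange identities $\sum_i\zeta_i^k/m_i=0$ for $0\le k\le d-2$ and $=1$ for $k=d-1$, the computation $f'(\zeta_i)=1+\rho\,m_i$, the nonvanishing of $\sum_i\zeta_i^{d-1}/(1-\lambda_i)$ via the nonsingular Vandermonde system, the identification of $\bigl(1/(1-\lambda_i)\bigr)_i$ with $\bigl(-1/(\rho m_i)\bigr)_i$, and the observation that the projective rescaling $\zeta\mapsto t\zeta$ corresponds to conjugation by $z\mapsto tz$ are all sound, and your surjectivity argument (center the fixed points, match labels, recover $\rho$) closes that half completely.

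The counting assertion $\#\bigl(\Sigma_d(\lambda)\bigr)=s_d(\lambda)$ is where the genuine gap lies. Your B\'ezout-plus-stratification plan has the right shape --- the formula~(\ref{eq2.3}) visibly reads ``$(d-2)!$ minus degenerate contributions'' --- but two points are unresolved. First, your closing hope that $\overline{\Sigma}_d(\lambda)$ is zero-dimensional is false in general: for $\mathbb{I}=\{I_1,\dots,I_l\}\in\mathfrak{I}(\lambda)$ every $W_u$ vanishes, so all the equations $\sum_u W_u\xi_u^k=0$ hold identically on the collision locus $\mathbb{P}^{l-1}$, and $\overline{\Sigma}_d(\lambda)$ contains the linear space $\{\sum_u(\#I_u)\xi_u=0\}\cong\mathbb{P}^{l-2}$, which is positive-dimensional as soon as $l\ge 3$. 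Hence B\'ezout cannot be applied as a finite count, and genuine residual or excess intersection theory (or a substitute such as a deformation argument) is unavoidable. Second, the claim that the stratum indexed by $\mathbb{I}$ contributes exactly $e_{\mathbb{I}}(\lambda)\cdot\prod_{k=d-\#\mathbb{I}+1}^{d-2}k$ with $e_{\mathbb{I}}(\lambda)=\prod_{I\in\mathbb{I}}(\#I-1)s_{\#I}(\lambda_I)$ is precisely the content of the cited Propositions~4.3 and~9.1 of~\cite{sugi1} and occupies most of that paper; you have correctly located the difficulty but supplied neither the multiplicity computation nor the origin of the weights $(\#I-1)$ and the factor $\prod_{k=d-\#\mathbb{I}+1}^{d-2}k$. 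As written, the first half of the proposition is asserted rather than proved.
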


We put
\[
 \widetilde{\Sigma}_d(\lambda) := \left\{(\zeta_1,\dots,\zeta_d) \in \mathbb{C}^{d} \
        \left|\ \begin{matrix}
		 \sum_{i=1}^d \zeta_i = 0 \\
		 \sum_{i=1}^d \frac{1}{1-\lambda_i}\zeta_i^k 
			= \begin{cases} 0 & \textrm{for} \quad 1 \le k \le d-2 \\
				-1 & \textrm{for} \quad k = d-1 \end{cases}\\
		 \zeta_1,\ldots,\zeta_d \textrm{ are mutually distinct}
		\end{matrix}
        \right. \right\}.
\]
Then the natural projection $\widetilde{\Sigma}_d(\lambda) \to \Sigma_d(\lambda)$ 
defined by $(\zeta_1,\dots,\zeta_d) \mapsto (\zeta_1:\cdots:\zeta_d)$
is a $(d-1)$-to-one map because for every $(\zeta_1:\cdots:\zeta_d) \in \Sigma_d(\lambda)$, 
we have $\sum_{i=1}^d \frac{1}{1-\lambda_i}\zeta_i^{d-1} \ne 0$ by Proposition~\ref{prop5.1}.
Hence, we have
\begin{equation}\label{eq5.1}
\#\left( \widetilde{\Sigma}_d(\lambda) \right) = (d-1) \#\left( \Sigma_d(\lambda) \right) = (d-1)s_d(\lambda).
\end{equation}

We consider next the relation between $\widetilde{\Sigma}_d(\lambda)$ and $\widehat{\Phi}_d^{-1}(\bar{\lambda})$.
We can define the surjection 
$\widehat{\pi}(\lambda) : \widetilde{\Sigma}_d(\lambda) \to \widehat{\Phi}_d^{-1}(\bar{\lambda})$ by
\[
   (\zeta_1,\dots,\zeta_d) \mapsto f(z)=z+(z - \zeta_1)\cdots(z-\zeta_d)
\]
by lifting up the map $\pi(\lambda) : \Sigma_d(\lambda) \to \Phi_d^{-1}(\bar{\lambda})$ in Proposition~\ref{prop5.1}.
Here, since $d \geq 3$, every polynomial $f(z)=z+(z - \zeta_1)\cdots(z-\zeta_d)$ for $(\zeta_1,\dots,\zeta_d) \in \widetilde{\Sigma}_d(\lambda)$
is monic and centered.

We put
\[
  \mathfrak{S}\left( \mathfrak{K}(\lambda) \right) : = \left\{ \sigma \in \mathfrak{S}_d \mid i \in K \in \mathfrak{K}(\lambda) \Longrightarrow \sigma(i) \in K \right\}.
\]
Here, note that we also have 
$\mathfrak{S}\left( \mathfrak{K}(\lambda) \right) 
= \left\{ \sigma \in \mathfrak{S}_d \mid \lambda_{\sigma(i)} = \lambda_i \text{ for every } 1 \leq i \leq d \right\}$.
Moreover, $\mathfrak{S}\left( \mathfrak{K}(\lambda) \right)$ is a subgroup of $\mathfrak{S}_d$ and is isomorphic to 
$\prod_{K \in \mathfrak{K}(\lambda)} \mathrm{Aut}(K) \cong \prod_{K \in \mathfrak{K}(\lambda)} \mathfrak{S}_{\#K}$.

The group $\mathfrak{S}\left( \mathfrak{K}(\lambda) \right)$ naturally acts on $\widetilde{\Sigma}_d(\lambda)$ 
by the permutation of coordinates, and its action is free.
Moreover, for $\zeta, \zeta' \in \widetilde{\Sigma}_d(\lambda)$, 
the equality $\widehat{\pi}(\lambda)(\zeta) = \widehat{\pi}(\lambda)(\zeta')$ holds if and only if 
the equality $\zeta' = \sigma \cdot \zeta$ holds for some $\sigma \in \mathfrak{S}\left( \mathfrak{K}(\lambda) \right)$, 
which can be verified by a similar argument to the proof of Lemma~4.5~(6) in~\cite{sugi1}.
We therefore have the bijection
\[
    \overline{\widehat{\pi}(\lambda)} : \widetilde{\Sigma}_d(\lambda) / \mathfrak{S}\left( \mathfrak{K}(\lambda) \right) 
    \cong \widehat{\Phi}_d^{-1}(\bar{\lambda}),
\]
which implies the equality
\begin{equation}\label{eq5.2}
  \#\left( \widehat{\Phi}_d^{-1}(\bar{\lambda}) \right) 
  = \frac{\#\left( \widetilde{\Sigma}_d(\lambda) \right)}{\#\left( \mathfrak{S}\left( \mathfrak{K}(\lambda) \right) \right)}
  = \frac{\#\left( \widetilde{\Sigma}_d(\lambda) \right)}{\prod_{K \in \mathfrak{K}(\lambda)}\left(\#K\right)!}.
\end{equation}

Combining equations~(\ref{eq5.1}) and~(\ref{eq5.2}), we have 
\[
   \#\left( \widehat{\Phi}_d^{-1}(\bar{\lambda}) \right) 
  = \frac{(d-1)s_d(\lambda)}{\prod_{K \in \mathfrak{K}(\lambda)}\left(\#K\right)!},
\]
which completes the proof of Theorem~\ref{mainthm2}.

\section*{Appendix}\label{appendix}

\setcounter{theorem}{0}
\setcounter{section}{1}
\renewcommand{\thesection}{\Alph{section}}
\setcounter{equation}{0}

In Appendix, we explain why we could find out the formula~(\ref{eq2.5}) in Theorem~\ref{mainthm1} 
by a careful look at equations~(\ref{eq2.3}) and~(\ref{eq2.4}).

First, 
we put
\[
	\tilde{e}_{\mathbb{I}} := \prod_{I \in \mathbb{I}} \bigl( \# I  -1 \bigr)!
\]
for each $\mathbb{I} \in \mathfrak{I}(\lambda)$.
Then by equation~(\ref{eq2.4}), we have $e_{\mathbb{I}}(\lambda) = \tilde{e}_{\mathbb{I}}$ for maximal $\mathbb{I} \in \mathfrak{I}(\lambda)$, 
and in general we have
\stepcounter{equation}
\begin{equation}\tag*{$(\mathrm \theequation)_{\mathbb{I}}$}\label{eqA.1}
	 e_{\mathbb{I}}(\lambda) =
	 \tilde{e}_{\mathbb{I}}
	  - \sum_\textrm{\scriptsize $\begin{matrix}
				       \mathbb{I}' \in \mathfrak{I}(\lambda) \\
				       \mathbb{I}' \succ \mathbb{I}, \;
				       \mathbb{I}' \ne \mathbb{I}
				     \end{matrix}$}
	 \left(
	  e_{\mathbb{I}'}(\lambda) \cdot \prod_{I \in \mathbb{I}}
	  \left( \prod_{k=\# I - \chi_I(\mathbb{I}')+1 }^{\# I -1}k \right)
	 \right)
\end{equation}
for every $\mathbb{I} \in \mathfrak{I}(\lambda)$.
Substituting equations~$(\mathrm \theequation)_{\mathbb{I}'}$ into equation~\ref{eqA.1} successively
for every pair $\mathbb{I}', \mathbb{I} \in \mathfrak{I}(\lambda)$ with $\mathbb{I}' \succ \mathbb{I}$,
we obtain the equalities in which every $e_{\mathbb{I}}(\lambda)$ is expressed in a linear combination of $\tilde{e}_{\mathbb{I}'}$ with $\mathbb{I}' \succ \mathbb{I}$.
Finally, substituting all of them into equation~(\ref{eq2.3}),
we obtain the equality in which $s_d(\lambda)$ is expressed in a linear combination of $(d-2)!$ and $\tilde{e}_{\mathbb{I}}$ for $\mathbb{I} \in \mathfrak{I}(\lambda)$,
which is expected to be equivalent to equation~(\ref{eq2.5}).

Let us execute the calculation explained above for some examples.
Note that equation~(\ref{eq2.5}) is equivalent to
\begin{equation}\label{eqA.2}
   s_d(\lambda) = (d-2)! - \sum_{\mathbb{I}\in \mathfrak{I}(\lambda)} 
				\left( \left\{ -(d-1) \right\}^{\#\mathbb{I} - 2} 
				\cdot \tilde{e}_{\mathbb{I}} \right).
\end{equation}

\begin{example}\label{exA.1}
 Let us consider the case in which $\mathbb{I}=\{ I_1, I_2, I_3 \}$ is the unique maximal element of $\mathfrak{I}(\lambda)$.
 In this case, we have $\mathfrak{I}(\lambda) = \left\{ \mathbb{I}, \mathbb{I}_1, \mathbb{I}_2, \mathbb{I}_3 \right\}$, where
 \[
	\mathbb{I}_1 = \{ I_1, I_2 \amalg I_3 \}, \quad
	\mathbb{I}_2 = \{ I_2, I_1 \amalg I_3 \} \quad \textrm{and} \quad
	\mathbb{I}_3 = \{ I_3, I_1 \amalg I_2 \}.
 \]
 We put $\#I_u = i_u$ for $1 \leq u \leq 3$.  Note that we have $i_1 + i_2 + i_3 = d$.
 By equation~\ref{eqA.1}, we have
 $e_{\mathbb{I}}(\lambda) = \tilde{e}_{\mathbb{I}}$ and 
 $e_{\mathbb{I}_1}(\lambda) = \tilde{e}_{\mathbb{I}_1} - e_{\mathbb{I}}(\lambda) \cdot \left( i_2+i_3-1 \right) = \tilde{e}_{\mathbb{I}_1} - \left( d - i_1 - 1 \right) \cdot \tilde{e}_{\mathbb{I}}$.

 Hence, by equation~(\ref{eq2.3}), we have
 \begin{align*}
	s_d(\lambda) &= (d-2)! - \left\{ e_{\mathbb{I}}(\lambda)\cdot(d-2) + 
	e_{\mathbb{I}_1}(\lambda) + e_{\mathbb{I}_2}(\lambda) + e_{\mathbb{I}_3}(\lambda) \right\}\\
	&= (d-2)! - \left[ (d-2) \cdot \tilde{e}_{\mathbb{I}} + 
		\sum_{u=1}^3 \left\{ 
			\tilde{e}_{\mathbb{I}_u} - \left( d-i_u-1 \right) \cdot \tilde{e}_{\mathbb{I}}
		 \right\} \right]\\
	&= (d-2)! - \tilde{e}_{\mathbb{I}_1} - \tilde{e}_{\mathbb{I}_2} - \tilde{e}_{\mathbb{I}_3} + (d-1) \cdot \tilde{e}_{\mathbb{I}},
 \end{align*}
 which is the same as equation~(\ref{eqA.2}) in this case.
\end{example}

\begin{example}\label{exA.2}
 Let us also consider the case in which $\mathbb{I}=\{ I_1, I_2, I_3, I_4 \}$ is the unique maximal element of $\mathfrak{I}(\lambda)$.
 In this case, $\mathfrak{I}(\lambda)$ has $14$ elements, and a similar calculation to Example~\ref{exA.1} is much more complicated than the calculation in Example~\ref{exA.1}.
 We only consider the coefficient of $\tilde{e}_{\mathbb{I}}$.

 We put $\#I_u = i_u$ for $1 \leq u \leq 4$. Note that we have $i_1 + i_2 + i_3 + i_4 = d$.
 We can express
 \[
     \mathfrak{I}(\lambda) = \left\{ \mathbb{I}, \mathbb{I}_{\{1,2\}}, \mathbb{I}_{\{3,4\}}, \mathbb{I}_{\{1,3\}}, \mathbb{I}_{\{2,4\}}, \mathbb{I}_{\{1,4\}}, \mathbb{I}_{\{2,3\}}, \mathbb{I}_1, \mathbb{I}_2, \mathbb{I}_3, \mathbb{I}_4, \mathbb{I}_5, \mathbb{I}_6, \mathbb{I}_7 \right\},
 \]
 where $\mathbb{I}_{\{1,2\}} = \left\{ I_1 \amalg I_2, I_3, I_4 \right\},\ 
 \mathbb{I}_1 = \left\{ I_1, I_2 \amalg I_3 \amalg I_4 \right\}$ and
 $\mathbb{I}_5 = \{ I_1 \amalg I_2, I_3 \amalg I_4 \}$ for instance.
 Here, $\mathbb{I}_1,\dots, \mathbb{I}_7$ denote the same as in Example~\ref{ex4.4}.

 By equation~\ref{eqA.1}, we have $e_{\mathbb{I}}(\lambda) = \tilde{e}_{\mathbb{I}}$ and
 $e_{\mathbb{I}_{\{1,2\}}}(\lambda) = \tilde{e}_{\mathbb{I}_{\{1,2\}}} - \left( i_1 + i_2 - 1 \right) \cdot \tilde{e}_{\mathbb{I}}$.
 For $\mathbb{I}_1$, we have
 $\left\{ \mathbb{I}' \in \mathfrak{I}(\lambda) \mid \mathbb{I}' \succ \mathbb{I}_1 \right\}
 = \left\{ \mathbb{I}, \mathbb{I}_{\{2,3\}}, \mathbb{I}_{\{3,4\}}, \mathbb{I}_{\{2,4\}}, \mathbb{I}_1 \right\}$, which implies
 \begin{align*}
    e_{\mathbb{I}_1}(\lambda) &= \tilde{e}_{\mathbb{I}_1} 
	- \Bigl[  
		e_{\mathbb{I}}(\lambda) \cdot \left( i_2+i_3+i_4-1 \right)\left( i_2+i_3+i_4-2 \right)\\
		&\hspace{45pt} + \left\{  
			e_{\mathbb{I}_{\{2,3\}}}(\lambda) + e_{\mathbb{I}_{\{3,4\}}}(\lambda) + e_{\mathbb{I}_{\{2,4\}}}(\lambda)
		\right\} \cdot \left( i_2+i_3+i_4-1 \right)
	\Bigr]\\
	&= \tilde{e}_{\mathbb{I}_1} 
	- \Bigl[  
		\left( i_2+i_3+i_4-1 \right)\left( i_2+i_3+i_4-2 \right) 
			\cdot \tilde{e}_{\mathbb{I}}+ \left( i_2+i_3+i_4-1 \right)\\
		&\hspace{10pt}  \times
		\left\{  
			\tilde{e}_{\mathbb{I}_{\{2,3\}}} 
				- \left( i_2 + i_3 - 1 \right) \cdot \tilde{e}_{\mathbb{I}}
			+ \tilde{e}_{\mathbb{I}_{\{3,4\}}} 
				- \left( i_3 + i_4 - 1 \right) \cdot \tilde{e}_{\mathbb{I}}
			+ \tilde{e}_{\mathbb{I}_{\{2,4\}}} 
				- \left( i_2 + i_4 - 1 \right) \cdot \tilde{e}_{\mathbb{I}}
		\right\}
	\Bigr]\\
	&= \tilde{e}_{\mathbb{I}_1} - \left( i_2+i_3+i_4-1 \right) \left\{
			\tilde{e}_{\mathbb{I}_{\{2,3\}}} + \tilde{e}_{\mathbb{I}_{\{3,4\}}} +
			 \tilde{e}_{\mathbb{I}_{\{2,4\}}} 
		\right\} + \left( i_2+i_3+i_4-1 \right)\\
	&\hspace{20pt} \times \left\{
			-\left( i_2+i_3+i_4-2 \right) +\left( i_2 + i_3 - 1 \right) 
			+ \left( i_3 + i_4 - 1 \right) + \left( i_2 + i_4 - 1 \right)
		\right\} \cdot \tilde{e}_{\mathbb{I}}\\
	&= \tilde{e}_{\mathbb{I}_1} - \left( i_2+i_3+i_4-1 \right) \left\{
			\tilde{e}_{\mathbb{I}_{\{2,3\}}} + \tilde{e}_{\mathbb{I}_{\{3,4\}}} +
			 \tilde{e}_{\mathbb{I}_{\{2,4\}}} 
		\right\} + \left( i_2+i_3+i_4-1 \right)^2 \cdot \tilde{e}_{\mathbb{I}}.
 \end{align*}
 For $\mathbb{I}_5$, we have
 $\left\{ \mathbb{I}' \in \mathfrak{I}(\lambda) \mid \mathbb{I}' \succ \mathbb{I}_5 \right\}
 = \left\{ \mathbb{I}, \mathbb{I}_{\{1,2\}}, \mathbb{I}_{\{3,4\}}, \mathbb{I}_5 \right\}$, which implies
 \begin{align*}
  e_{\mathbb{I}_5}(\lambda) &= \tilde{e}_{\mathbb{I}_5}
	- \Bigl[
		e_{\mathbb{I}}(\lambda) \cdot \left( i_1+i_2-1 \right)\left( i_3+i_4-1 \right)\\
	&\hspace{70pt} + e_{\mathbb{I}_{\{1,2\}}}(\lambda) \cdot \left( i_3+i_4-1 \right)
		+ e_{\mathbb{I}_{\{3,4\}}}(\lambda) \cdot \left( i_1+i_2-1 \right)
	\Bigr]\\
	&= \tilde{e}_{\mathbb{I}_5}
	- \Bigl[
		\left( i_1+i_2-1 \right)\left( i_3+i_4-1 \right) \cdot \tilde{e}_{\mathbb{I}}
		+ \left( i_3+i_4-1 \right) \cdot \left\{
			\tilde{e}_{\mathbb{I}_{\{1,2\}}} - \left( i_1 + i_2 - 1 \right) \cdot \tilde{e}_{\mathbb{I}}
		\right\}\\
		&\hspace{70pt}+ \left( i_1+i_2-1 \right) \cdot \left\{
			\tilde{e}_{\mathbb{I}_{\{3,4\}}} - \left( i_3 + i_4 - 1 \right) \cdot \tilde{e}_{\mathbb{I}}
		\right\}
	\Bigr]\\
	&= \tilde{e}_{\mathbb{I}_5} - \left( i_3+i_4-1 \right) \cdot \tilde{e}_{\mathbb{I}_{\{1,2\}}}
						- \left( i_1+i_2-1 \right) \cdot \tilde{e}_{\mathbb{I}_{\{3,4\}}}
		+ \left( i_1+i_2-1 \right)\left( i_3+i_4-1 \right) \cdot \tilde{e}_{\mathbb{I}}.
 \end{align*}
 By equation~(\ref{eq2.3}), we have
 \begin{align*}
	s_d(\lambda) &= (d-2)! - (d-2)(d-3) \cdot e_{\mathbb{I}}(\lambda)\\
	 &\hspace{15pt}- (d-2)\left\{ e_{\mathbb{I}_{\{1,2\}}}(\lambda) + e_{\mathbb{I}_{\{3,4\}}}(\lambda) + e_{\mathbb{I}_{\{1,3\}}}(\lambda) + e_{\mathbb{I}_{\{2,4\}}}(\lambda) + e_{\mathbb{I}_{\{1,4\}}}(\lambda) + e_{\mathbb{I}_{\{2,3\}}}(\lambda) \right\}\\
	&\hspace{50pt}- \left\{ e_{\mathbb{I}_{1}}(\lambda) + e_{\mathbb{I}_{2}}(\lambda) + e_{\mathbb{I}_{3}}(\lambda) + e_{\mathbb{I}_{4}}(\lambda) + e_{\mathbb{I}_{5}}(\lambda) + e_{\mathbb{I}_{6}}(\lambda) + e_{\mathbb{I}_{7}}(\lambda) \right\}.
 \end{align*}
 Therefore, when $s_d(\lambda)$ is expressed as a linear combination of $(d-2)!$ and $\tilde{e}_{\mathbb{I}'}$ for $\mathbb{I}' \in \mathfrak{I}(\lambda)$,
 the coefficient of $\tilde{e}_{\mathbb{I}}$ equals
 \begin{align*}
   &-(d-2)(d-3) + (d-2)\bigl\{ \left( i_1+i_2-1 \right) + \left( i_3+i_4-1 \right) + \left( i_1+i_3-1 \right)\\
   &\hspace{135pt}+ \left( i_2+i_4-1 \right) + \left( i_1+i_4-1 \right) + \left( i_2+i_3-1 \right) \bigr\}\\
   &-\left( i_2+i_3+i_4-1 \right)^2 -\left( i_1+i_3+i_4-1 \right)^2 -\left( i_1+i_2+i_4-1 \right)^2 -\left( i_1+i_2+i_3-1 \right)^2\\
   &- \left( i_1+i_2-1 \right)\left( i_3+i_4-1 \right) - \left( i_1+i_3-1 \right)\left( i_2+i_4-1 \right) - \left( i_1+i_4-1 \right)\left( i_2+i_3-1 \right)\\
	&= -(d-2)(d-3) + (d-2)(3d-6) -\left\{ \sum_{u=1}^4\left( d-i_u-1 \right)^2
		+ 2\sum_{u<v}i_ui_v -3d + 3 \right\}\\
	&= 2d^2 - 7d + 6 - \left\{ 4(d-1)^2 -2(d-1)(i_1+i_2+i_3+i_4) + (i_1+i_2+i_3+i_4)^2 -3d+3 \right\}\\
	&= -(d-1)^2,
 \end{align*}
 which assures equation~(\ref{eqA.2}) in this case.
\end{example}

By a similar calculation to Examples~\ref{exA.1} and~\ref{exA.2} for some other examples,
we can state Theorem~\ref{mainthm1}.
However, we could not prove Theorem~\ref{mainthm1} by a similar calculation to Examples~\ref{exA.1} and~\ref{exA.2}.
Hence, we took another method, which was already described precisely 
in Section~\ref{proof1}.

\end{document}